\theoremstyle{plain}
\newtheorem{thm}{Theorem}[section]
\newtheorem{lem}[thm]{Lemma}
\newtheorem{cor}[thm]{Corollary}
\newtheorem{prop}[thm]{Proposition}
\newtheorem{rem}[thm]{Remark}
\theoremstyle{definition}
\newtheorem{exam}[]{Example}
\newcommand{\sm}{\ensuremath{\smallsetminus}}
\newcommand{\diam}{\textnormal{diam}}
\newcommand{\inv}{\ensuremath{^{-1}}}
\newcommand{\rand}{\partial}
\newcommand{\Aut}{\textnormal{Aut}}
\newcommand{\es}{\ensuremath{\emptyset}}
\newcommand{\sub}{\subseteq}
\newcommand{\sequ}[1]{(#1_i)_{i\in{\mathbb N}}}
\newcommand{\comment}[1]{}
\newcommand{\nat}{{\mathbb N}}
\newcommand{\real}{{\mathbb R}}
\newcommand{\ganz}{{\mathbb Z}}
\renewcommand{\L}{\ensuremath{\mathcal L}}
\renewcommand{\H}{\ensuremath{\mathcal H}}
\newenvironment{txteq*}
  {
    \begin{equation*}
    \begin{minipage}[c]{0.85\textwidth} 
    \em                                
  }
  {\end{minipage}\end{equation*}\ignorespacesafterend}
\begin{document}

\title[Group actions on metric spaces]{Group actions on metric spaces:\\fixed points and free subgroups}
\author{Matthias Hamann}
\address{Matthias Hamann, Department of Mathematics, University of Hamburg, Bundes\-stra\ss e~55, 20146 Hamburg, Germany}
\maketitle

\begin{abstract}
We look at group actions on metric spaces, particularly at group actions on geodesic hyperbolic spaces.
We classify the types of automorphisms on these spaces and prove several results about the density of the hyperbolic limit set of the group in the whole limit set of the group.
In the case of graphs, our theorems hold also when the graphs are not hyperbolic.
\end{abstract}

\section{Introduction}

In many situations, groups acting on some topological space offer the alternative between the existence of a free subgroup $\ganz \ast \ganz$ and the existence of a fixed point in the space under the action of the group.
For example, if the space is a proper geodesic hyperbolic space, then such results can be found in~\cite{ABCFLMSS,CoornDelPapa,GhHaSur,gromov,W-FixedSets}.
For the case of connected locally finite graphs we refer to~\cite{H-AutoAndEndo,J-FiniteFixedSets,EoG,W-FixedSets} for these results.

\smallskip

As mentioned, there is a vast literature on this topic for hyperbolic spaces, usually for the case of proper geodesic hyperbolic spaces.
Some of the fundamental properties of these extend to geodesic hyperbolic spaces that are not proper.
For example:
\begin{enumerate}[$\bullet$]
\item Every automorphism (i.e.\ self-isometry) of~$X$ is either elliptic, hyperbolic, or parabolic (Theorem~\ref{thm_W1});
\item a group $G$ of automorphisms fixes either a bounded subset of~$X$ or a unique limit point of~$G$ in~$\rand X$, or $X$ has precisely two limit points of~$G$, or $G$ contains two hyperbolic elements that freely generate a free subgroup (Theorem~\ref{thm_W3}).
\end{enumerate}
I~suspect that these two facts may be regarded as `known' by some experts, but since I have been unable to find proofs in the literature, I~have included proofs below.
In addition to the previous statements, we shall also prove the following results for a geodesic hyperbolic space $X$ with a group $G$ of automorphisms of~$X$:
\begin{enumerate}[$\bullet$]
\item The hyperbolic limit set of~$G$ is dense in the limit set of~$G$ (Theorem~\ref{thm_W2});
\item the hyperbolic limit set of~$G$ is bilaterally dense in the limit set of~$G$ if and only if either $X$ has precisely two limit points of~$G$ or $G$ contains two hyperbolic elements without a common fixed point (Theorem~\ref{thm_P5});
\item if the limit set of~$G$ is infinite, then it is a perfect set (Theorem~\ref{thm_P7}).
\end{enumerate}
(We refer to Section~\ref{sec_Def} for definitions.)
All these results are known to be true for proper geodesic hyperbolic spaces, cp.~\cite{P-BilatDenseness,W-FixedSets}.

\medskip

To avoid too many hyperbolic technicalities, we build up a general topological setting (\emph{contractive $G$-completions}), in which we prove our results.
This topological setting will extend Woess's contractive $G$-compactifications \cite{W-FixedSets} to spaces that need not be proper.
We shall show in Section~\ref{sec_Hyp} that geodesic hyperbolic spaces are contractive $G$-completions.

Another feature of contractive $G$-completions is that geodesic hyperbolic spaces are not their only application: connected graphs with a suitable notion of ends are further examples of these spaces (Section~\ref{sec_App:Graphs}).
Unfortunately, we cannot take all vertex ends (see Example~\ref{exam_vertexEnds}) but if we concentrate on those that contain no ray whose vertex set has an infinite bounded subset, then these ends form a boundary that satisfies our requirements (Theorem~\ref{thm_VertexEnds}).
Another possibility (Theorem~\ref{thm_MetricEnds}) is to take metric ends instead of vertex ends.
(We refer to Section~\ref{sec_App:Graphs} for the definitions of these notions for ends.)
If the graph is locally finite, then the results are known, see~\cite{P-BilatDenseness,W-FixedSets}.

\section{Contractive $G$-completions}\label{sec_Def}\label{sec_results}

Let $X$ be a metric space.
If $G$ is a group of automorphisms on~$X$, then we call a regular Hausdorff space $\hat{X}\supseteq X$ a \emph{$G$-completion}
\begin{enumerate}[(1)]
\item\label{item_Completion} if the inclusion $X\to\hat{X}$ is a homeomorphism and $X$ is open and dense in~$\hat{X}$,
\item\label{item_CompletionI} if every element of~$G$ extends to a homeomorphism of~$\hat{X}$,
\item\label{item_CompletionII}\label{item_CompletionIII} if for every sequence $\sequ{g}$ in~$G$ with $d(x,xg_i)\to\infty$ for $i\to\infty$ and for some $x\in X$ either the set $\{xg_i\mid i\in\nat\}$ has an accumulation point in~$\rand X$ or $\langle g_i\mid i\in\nat\rangle$ contains an automorphism $\gamma$ such that each of the sets $\{x\gamma^i\mid i\in\nat\}$ and $\{x\gamma^{-i}\mid i\in\nat\}$ converges to some point in~$\rand X$ and
\item\label{item_CompletionIIStronger} if for every sequence $\sequ{g}$ in~$G$ and for any $x\in X$ such that the set ${\{xg_i\mid i\in\nat\}}$ converges to some boundary point $\eta$ and the set $\{xg_i\inv\mid i\in\nat\}$ has no accumulation point in~$\rand X$ there is a sequence $(h_j)_{j\in\nat}$ in $\langle g_i\mid i\in\nat\rangle$ such that each of the sets $\{xh_j^i\mid i\in\nat\}$ and ${\{xh_j^{-i}\mid i\in\nat\}}$ converges to distinct boundary points $\eta_j$, $\mu_j\in\rand X$, respectively, and such that the sequence $(\eta_j)_{j\in\nat}$ converges to~$\eta$.
\end{enumerate}
\setcounter{equation}{4}

A~completion $\hat{X}$ of~$X$ is \emph{projective} if for all sequences $\sequ{x}, \sequ{y}$ in~$X$ such that $\sequ{x}$ converges to~$\eta\in\rand X$ and such that $d(x_i,y_i)\leq M$ for some $M<\infty$ also the sequence $\sequ{y}$ converges to~$\eta$.
A~$G$-completion $\hat{X}$ of~$X$ is \emph{contractive} if it is projective and if for all sequences $\sequ{g}$ in~$G$ with
\[
xg_n\to\eta\in\rand X \quad \text{and} \quad xg_n\inv\to\mu\in\rand X
\]
for some $x\in X$ the sequence $(yg_n)_{n\in\nat}$ \emph{converges uniformly} to~$\eta$ outside every neighbourhood of~$\mu$ in~$\hat{X}$, that is, that for any open neighbourhoods $U$ of~$\eta$ and $V$ of~$\mu$, there is an $n_0\in\nat$ such that $yg_n\in U$ for all $y\in \hat{X}\sm V$ and all $n\geq n_0$.

\begin{lem}\label{lem_NoLocalBP}
Let $\hat{X}$ be a projective $G$-completion.
No bounded sequence in~$X$ converges to any $\eta\in\rand X$.
\end{lem}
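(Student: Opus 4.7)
The plan is to argue by contradiction, using projectivity to transport convergence from a bounded sequence to a constant sequence, and then invoke the Hausdorff property to derive an absurdity.

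Suppose some bounded sequence $\sequ{x}$ in~$X$ converges to a point $\eta\in\rand X$. Since the sequence is bounded, there is an $M<\infty$ with $d(x_0,x_i)\leq M$ for all~$i$. Fix any point $y\in X$ and consider the constant sequence $\sequ{y}$ defined by $y_i:=y$. Then
\[
d(x_i,y_i)=d(x_i,y)\leq d(x_i,x_0)+d(x_0,y)\leq M+d(x_0,y)
\]
is uniformly bounded in~$i$, so the hypotheses of projectivity apply to the pair $(\sequ{x},\sequ{y})$.

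By projectivity, $\sequ{y}$ converges to~$\eta$. On the other hand, a constant sequence in any topological space converges to its constant value, so $\sequ{y}$ converges to~$y\in X$ as well. Since $\hat{X}$ is Hausdorff, limits are unique, which forces $y=\eta$. But $y\in X$ and $\eta\in\rand X=\hat{X}\sm X$ (recall that $X$ is open in~$\hat{X}$), contradicting disjointness. The main step is really just the observation that the constant sequence $\sequ{y}$ witnesses the bounded-distance hypothesis of projectivity, and no further structure of~$G$ or~$\hat{X}$ is needed; hence there is no substantial obstacle to overcome.
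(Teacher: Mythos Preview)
Your proof is correct and follows essentially the same approach as the paper: assume a bounded sequence converges to~$\eta$, use projectivity to conclude that a constant sequence in~$X$ also converges to~$\eta$, and derive a contradiction from the Hausdorff property. You have merely spelled out in detail (the bound via $x_0$, the uniqueness of limits, and the disjointness of $X$ and $\rand X$) what the paper compresses into two sentences.
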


\begin{proof}
Let us suppose that we find an $\eta\in\rand X$ and a bounded sequence $(x_i)_{i\in\nat}$ that converges to~$\eta$.
Then any constant sequence $(x)_{i\in\nat}$ in~$X$ converges to~$\eta$ due to projectivity.
But this contradicts the fact that $\hat{X}$ is Hausdorff.
\end{proof}

\begin{lem}\label{lem_UVExistNicely}
Let $\hat{X}$ be a projective $G$-completion and let $\eta$ and $\mu$ be distinct elements of~$\rand X$.
For every open neighbourhood $U$ of~$\eta$ with $\mu\notin\overline{U}$, there exists an open neighbourhood $V$ of~$\mu$ with $d(U,V)>0$ and $\overline{U}\cap\overline{V}=\es$.

Furthermore, we may choose $V$ so that it does not contain a prescribed point $x\in X\sm U$.
\end{lem}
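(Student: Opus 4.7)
The plan is to build $V$ in two stages: first, an open neighbourhood $V_0$ of $\mu$ whose closure is already disjoint from $\overline U$, obtained from regularity alone; then an open subset $V\subseteq V_0$ whose intersection with $X$ is metrically separated from $U\cap X$. For the first stage, since $\hat X$ is regular Hausdorff and $\mu\notin\overline U$, one gets disjoint open sets $V_0\ni\mu$ and $W\supseteq\overline U$; then $\overline{V_0}\subseteq \hat X\sm W$ is disjoint from $\overline U$, so $\overline{V_0}\cap\overline U=\es$.

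For the second stage, I would introduce the metric $r$-neighbourhood $N_r:=\{y\in X : d(y,U\cap X)<r\}$ of~$U$ inside~$X$; this is open in~$X$ and hence open in~$\hat X$. The candidate is $V_r:=V_0\sm\overline{N_r}$ (with closure taken in~$\hat X$). By construction, any $y\in V_r\cap X$ lies outside $N_r$, so $d(V_r\cap X, U\cap X)\ge r$, and $\overline{V_r}\cap\overline U=\es$ because $V_r\subseteq V_0$. The whole content of the lemma thus reduces to verifying that $\mu\in V_r$ for some $r>0$, equivalently that $\mu\notin\overline{N_r}$ for some $r>0$.

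To prove this, I would argue by contradiction: assuming $\mu\in\overline{N_{1/n}}$ for every $n\in\nat$, I would select a sequence $(y_n)$ with $y_n\in V_0\cap N_{1/n}$ and $y_n\to\mu$, and then for each $n$ pick $x_n\in U\cap X$ with $d(x_n,y_n)<1/n$. Since $y_n\to\mu$ and $d(x_n,y_n)\le 1$, the projectivity of~$\hat X$ forces $x_n\to\mu$. But $x_n\in U$, whence $\mu\in\overline U$, contradicting the hypothesis. The \emph{furthermore} clause is then a routine application of Hausdorffness: separate $\mu$ from the prescribed point $x\in X\sm U$ by disjoint open neighbourhoods in~$\hat X$ and intersect with~$V$; this preserves both $d(U,V)>0$ and $\overline V\cap\overline U=\es$.

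The main delicacy is the production, in the contradiction step, of a sequence $(y_n)$ in $V_0\cap N_{1/n}$ that actually converges to~$\mu$, from the mere fact that every neighbourhood of~$\mu$ meets each $N_{1/n}$. This implicitly relies on sequential access to neighbourhoods of~$\mu$ (in practice, a countable base at~$\mu$, obtained by regularity together with the first countability inherited at boundary points), which is consistent with the sequence-based formulation of projectivity used earlier (e.g.\ in Lemma~\ref{lem_NoLocalBP}); if such sequential access were unavailable, the argument would have to be rephrased in terms of nets and projectivity verified on nets with uniformly bounded pairwise distance.
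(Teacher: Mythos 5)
Your proof is correct and follows essentially the same route as the paper's: regularity provides the topological separation of $\mu$ from $\overline{U}$, and projectivity (applied to a sequence in $U\cap X$ paired with a nearby sequence converging to $\mu$) shows that $\mu$ survives the removal of a metric neighbourhood of~$U$. The only difference is cosmetic: a single fixed radius $r>0$ already suffices, since by projectivity any sequence within distance $r$ of $U$ that converges to a boundary point must converge into $\overline{U}$, so the diagonal argument over the radii $1/n$ --- and the extra reliance on a countable base at $\mu$ that it entails --- can be dispensed with, while the sequences-versus-nets caveat you raise applies equally to the paper's own argument.
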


\begin{proof}
As $\hat{X}$ is regular, we find an open neighbourhood $V'\sub X\sm (U\cup\{x\})$ of~$\mu$ and an open neighbourhood $U'$ of $\overline{U}\cup\{x\}$ that are disjoint.
Projectivity gives us that any sequence within a fixed distance $M>0$ to~$U$ converges to a boundary point in~$\overline{U}$ and hence not to~$\mu$.
So $V=V'\sm \overline{B}_M(U)$ is open, still has $\mu$ as an accumulation point, and satisfies the other assertions.
\end{proof}

We call an automorphism $g\in G$ on~$X$
\begin{enumerate}[$\bullet$]
\item \emph{elliptic} if it fixes a bounded  non-empty subset of~$X$;
\item\emph{hyperbolic} if it is not elliptic and if it fixes precisely two boundary points $\eta,\mu\in\rand X$;
\item \emph{parabolic} if it is not elliptic and if it fixes precisely one boundary point $\eta\in\rand X$.
\end{enumerate}

\begin{thm}\label{thm_W1}
Let $\hat{X}$ be a contractive $G$-completion of a metric space~$X$.
Then each $g\in G$ is either elliptic, hyperbolic, or parabolic.

Furthermore, if $g$ is hyperbolic and fixes the two boundary points $\eta$ and $\mu$, then $xg^n\to\eta$ and $xg^{-n}\to\mu$ for all $x\in X$ or vice versa, and if $g$ is parabolic, then $\{xg^n\mid n\in\ganz\}$ has precisely one accumulation point, the point fixed by~$g$.
\end{thm}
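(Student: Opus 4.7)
If $g$ is elliptic we are done. So suppose $g$ is not elliptic; then the full orbit $\{xg^n:n\in\ganz\}$ is unbounded for every $x\in X$ (otherwise it is itself a bounded, non-empty, $g$-invariant set), and the isometry identity $d(xg^n,xg^m)=d(x,xg^{m-n})$ shows that the forward orbit is also unbounded. Fix $x\in X$ and pass to a subsequence $n_i\to\infty$ in $\nat$ with $d(x,xg^{n_i})\to\infty$.

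My first goal is to exhibit an integer $k\neq 0$ such that $xg^{kn}\to\eta_0$ and $xg^{-kn}\to\mu_0$ for some $\eta_0,\mu_0\in\rand X$. I apply condition~(3) to $(g^{n_i})$: if its second alternative holds, then $\gamma\in\langle g^{n_i}\mid i\in\nat\rangle\leq\langle g\rangle$ is automatically a power of $g$ (as $\langle g\rangle$ is cyclic), yielding such a $k$. Otherwise I pass to a subsequence with $xg^{n_i}\to\eta\in\rand X$; if $\{xg^{-n_i}\}$ has no accumulation in $\rand X$, condition~(4) supplies the desired $k$ via any of the $h_j=g^{k_j}$ it produces. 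In the remaining subcase $\{xg^{-n_i}\}$ also accumulates in $\rand X$ (pass to a further subsequence so that $xg^{-n_i}\to\mu$); here no $k$ is produced directly, but contractivity applied to $(g^{n_i})$ with test points $y=xg$ and $y=xg^{-1}$---choosing neighborhoods of $\mu$ and $\eta$ avoiding these two $X$-points via Lemma~\ref{lem_UVExistNicely}---combined with continuity of $g$ on $\hat X$ proves $\eta g=\eta$ and $\mu g=\mu$; then a further contractivity argument (any other boundary fixed $\zeta\neq\mu$ satisfies $\zeta=\zeta g^{n_i}\to\eta$, hence $\zeta=\eta$) forces $\{\eta,\mu\}$ to be the only boundary fixed points of $g$, which already gives the classification in this subcase.

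When such a $k$ is available (WLOG $k>0$), Lemma~\ref{lem_UVExistNicely} yields disjoint open neighborhoods $U\ni\eta_0$ and $V\ni\mu_0$ with $V$ further avoiding the finite set $\{xg^j:0\leq j<k\}$. Contractivity applied to $(g^{kn})$ then gives $xg^{kn+j}=(xg^j)g^{kn}\in U$ eventually for every $j$ with $0\leq j<k$, so $xg^m\to\eta_0$ as $m\to\infty$; symmetrically $xg^{-m}\to\mu_0$. Continuity of $g$ then yields $\eta_0 g=\eta_0$ and $\mu_0 g=\mu_0$, and a repetition of the contractivity argument rules out further boundary fixed points, so $g$ is hyperbolic (if $\eta_0\neq\mu_0$) or parabolic (if $\eta_0=\mu_0$) with the stated convergence. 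The main obstacle is the leftover subcase of the previous paragraph, where only subsequential convergence along $(n_i)$ is at hand: to obtain full convergence I plan to argue by contradiction, supposing some $m_j\to\infty$ has $xg^{m_j}\not\to\eta$, applying conditions~(3)/(4) to $(g^{m_j})$ together with the contractivity and continuity machinery of this paragraph to force any boundary accumulation of $xg^{m_j}$ to be a fixed point of $g$ and hence to lie in $\{\eta,\mu\}$, and finally excluding the possibility $\mu$ by a joint contractivity argument using $(g^{n_i})$ and $(g^{m_j})$ together with the disjoint-neighborhood output of Lemma~\ref{lem_UVExistNicely}.
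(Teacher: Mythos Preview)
Your approach follows broadly the same route as the paper's, but the final ``leftover subcase'' sketch has a genuine gap. To invoke condition~(3) (or~(4)) on $(g^{m_j})$ you need $d(x,xg^{m_j})\to\infty$, which you never establish; you only know the full orbit is unbounded, not that every subsequence escaping a neighbourhood of~$\eta$ is metrically unbounded. In the parabolic situation this can actually fail --- see the Remark following the theorem and the example at the end of Section~\ref{sec_App:Graphs} --- so your stated goal of ``full convergence'' is too strong there: the theorem only asserts a unique accumulation point, and that already follows (once you note it) from your observation that every boundary accumulation point of the orbit is a fixed point of~$g$.

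In the hyperbolic case the gap is reparable, but the repair is precisely the device the paper uses and your sketch omits: from contractivity, fix a \emph{single} index~$m$ with $(\hat X\setminus V)g^{n_m}\subset U$ for disjoint $U\ni\eta$, $V\ni\mu$ with $x\notin U\cup V$ and $d(U,V)>0$; iterate to get $xg^{\ell n_m}\in U$ for all $\ell\geq 1$, and then (as in Lemma~\ref{lem_CertainHypExists}) deduce $d(x,xg^{\ell n_m})\to\infty$, whence projectivity gives $d(x,xg^j)\to\infty$ for all $j\to\infty$. Only after this does condition~(3) apply to an arbitrary subsequence~$(m_j)$, and then your plan (any boundary accumulation must lie in $\{\eta,\mu\}$; exclude~$\mu$ since $\rand A\subset\overline U$) goes through. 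A minor additional point: your invocation of Lemma~\ref{lem_UVExistNicely} in the $k$-paragraph presupposes $\eta_0\neq\mu_0$, which is guaranteed when~$k$ comes from~(4) but not when it comes from the second alternative of~(3); that case needs a separate (easy) argument using only Hausdorffness to pick~$V$ avoiding the finite set $\{xg^j:0\leq j<k\}$.
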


\begin{rem}
Note that in general for a parabolic element $g$ the analogous converging property as for hyperbolic elements need not be true, that is, at the end of Section~\ref{sec_App:Graphs} we shall give an example of a contractive $G$-completion $X$ that has a boundary point $\eta$ and a point $x\in X$ with $xg^n\not\to\eta$.
Due to projectivity, this implies $yg^n\not\to\eta$ for every $y\in X$.
\end{rem}

\begin{proof}[Proof of Theorem~\ref{thm_W1}]
Let $g\in G$ and $x\in X$.
Let us assume that $g$~is not elliptic.
Then the set $\{d(xg^n,xg^m)\mid m,n\in\ganz\}$ is unbounded and hence, the same is true for $\{d(x,xg^n)\mid n\in\nat\}$.
So we conclude by~(\ref{item_CompletionIII}) that $A:=\{xg^n\mid n\in\nat\}$ has an accumulation point $\eta\in\rand X$ and $B:=\{xg^{-n}\mid n\in\nat\}$ has an accumulation point $\mu\in\rand X$.

Since the elements of~$G$ are homeomorphisms on~$\hat{X}$, we know by projectivity of~$\hat{X}$ that
\[
\eta g=(\lim x g^{n_i})g=\lim (xg)g^{n_i}=\eta
\]
where $(g^{n_i})_{i\in\nat}$ is a subsequence of~$(g^i)_{i\in\nat}$ such that $xg^{n_i}\to\eta$ for $i\to\infty$.
So we have $\eta g=\eta$ and, analogously, we also have $\mu g=\mu$.

Let $\rand A$, $\rand B$ be the accumulation points of~$A$, $B$ in~$\rand X$, respectively.
Then the sets $\rand A$ and $\rand B$ are non-empty closed subsets of~$\rand X$.
First, we show that each of the two sets $A$ and $B$ has precisely one accumulation point.
Let us suppose that there is a second accumulation point $\eta'$ of~$A$.
We have $\eta'g=\eta'$, too.
The sequence $(xg^{-n_i})_{i\in\nat}$ is unbounded because of $d(x,xg^n)=d(xg^{-n},x)$.
If $(xg^{-n_i})_{i\in\nat}$ has no accumulation point in~$\rand X$, then there is a $z\in\ganz$ such that $(xg^{kz})_{k\in\nat}$ and $(xg^{-kz})_{k\in\nat}$ converge in~$\hat{X}$.
But then we have $|\rand A|=1=|\rand B|$, as $\hat{X}$ is projective and as $\{d(xg^z,xg^{z+i})\mid 0\leq i\leq z\}$ is bounded.
So $(xg^{-n_i})_{i\in\nat}$ converges to~$\mu$, a contradiction.
Hence, $(xg^{-n_i})_{i\in\nat}$ has an accumulation point in~$\rand X$, say $\mu$.
Let us take an infinite subsequence of~$(n_i)_{i\in\nat}$ such that $(xg^{-n_i})_{i\in\nat}$ converges for this subsequence to~$\mu$.
We may assume that $(n_i)_{i\in\nat}$ itself is this subsequence.
If $\eta\neq\mu$, let~$U$ and~$V$ be open neighbourhoods of~$\eta$ and $\mu$, respectively, with $x\notin U\cup V$ and $\overline{U}\cap\overline{V}=\es$.
Due to contractivity, there exists $m\in\nat$ with $(X\sm V)g^{n_m}\sub U$ and we conclude inductively $xg^{\ell n_m}\in U$ for all $\ell\in\nat$.
Due to projectivity, every accumulation point of~$A$ lies in~$\overline{U}$ as $\{d(xg^{n_m},xg^{i+n_m})\mid 0\leq i\leq n_m\}$ is bounded and, conversely, every accumulation point of~$B$ lies in~$\overline{V}$.
If $\eta=\mu$, let $U=V$ be an open neighbourhood of~$\eta$ with $\eta'\notin\overline{U}$ and let $(y_i)_{i\in\nat}$ be a sequence in $X\sm U$ that converges to~$\eta'$.
As $\hat{X}$ is contractive, there is an $m\in\nat$ with $y_ig^{n_m}\in U$ for all $i\in\nat$.
But then we have $\eta'=\eta' g^{n_m}\in\overline{U}$, a contradiction.
This shows that $\eta$ is the unique element of~$\rand A$.
Analogously, we obtain that $\mu$ is the unique element of~$\rand B$.

\medskip

We will show that no boundary point is fixed by~$g$ but $\eta$ and~$\mu$.
Let us suppose that there is $\nu\in\rand X\sm\{\eta,\mu\}$ with $\nu g=\nu$.
As $\hat{X}$ is regular, we may take open neighbourhoods $U$ and~$V$ of~$\eta$ and~$\mu$, respectively, with $\overline{U}\cap\overline{V}=\es$ such that $\nu\notin\overline{U}\cup\overline{V}$.
Let $(x_i)_{i\in\nat}$ be a sequence in~$X\sm(U\cup V)$ converging to~$\nu$.
By contractivity, we find an $n\in\nat$ with $x_ig^n\in U$ for all $i\in\nat$.
Hence, we have $\nu=\nu g^n\in\overline{U}$, a contradiction.
Thus, there are at most two boundary points, $\eta$ and $\mu$, of~$X$ fixed by~$g$ and $g$ is either parabolic or hyperbolic.
If $g$ is parabolic, then we just showed that the set $\{xg^n\mid n\in\ganz\}$ has precisely one accumulation point, as we showed earlier $|\rand A|=1$, and the same is true for $\{yg^n\mid n\in\ganz\}$ by projectivity.

\medskip

So let us assume that $\eta$ and~$\mu$ are distinct, that is, that $g$ is hyperbolic.
We have to show the converging property of hyperbolic automorphisms.
Let us first show that $xg^n$ and $xg^{-n}$ for $n\in\nat$ converge to $\eta$ and $\mu$, respectively.
Therefore, we show that we can find a sequence $(n_i)_{i\in\nat}$ such that $xg^{n_i}$ converges to~$\eta$ and $xg^{-n_i}$ converges to~$\mu$.
Let us take an arbitrary sequence $(n_i)_{i\in\nat}$ such that $xg^{n_i}$ converges to~$\eta$.
Let us suppose that $\mu$ is no accumulation point of~$xg^{-n_i}$.
As $d(x,xg^{-n_i})$ is unbounded, we know by~(\ref{item_CompletionIII}) that there is an $n\in\nat$ such that $xg^{nk}\to\eta$ and $xg^{-nk}\to\mu$ for $k\to\infty$.
By projectivity, this holds also for $g$ instead of~$g^n$.
Now, let $y\in X$.
As $\hat{X}$ is projective and $d(x,y)=d(xg^n,yg^n)$ for all $n\in\nat$, also the sequence $(yg^i)_{i\in\nat}$ converges to~$\eta$ and the sequence $(yg^{-i})_{i\in\nat}$ converges to~$\mu$.
This shows the additional statement on hyperbolic automorphisms.
\end{proof}

Notice that due to Theorem~\ref{thm_W1}, the automorphism $\gamma$ mentioned in~(\ref{item_CompletionIII}) is either hyperbolic or parabolic and in~(\ref{item_CompletionIIStronger}), we find infinitely many hyperbolic automorphisms whose directions converge to~$\eta$.

For a hyperbolic element $g$, let the boundary point to which the sequence $(xg^n)_{n\in\nat}$ for $x\in X$ converges be the \emph{direction} of~$g$.
Note that this definition does not depend on the point $x$ by projectivity.
By $g^+$ we denote the direction of~$g$ and with $g^-$ the direction of~$g\inv$.
For parabolic elements, we denote by $g^+$ and $g^-$ the unique fixed boundary point.
For a contractive $G$-completion $\hat{X}$ of~$X$, let the \emph{limit set} $\L(G)$ of~$G$ be the set of accumulation points in~$\rand X$ of~$xG$ for any $x\in X$ and let the \emph{hyperbolic limit set} $\H(G)$ of~$G$ be the set of directions of hyperbolic elements.
Again, these sets do not depend on the choice of~$x$ due to projectivity.

\begin{lem}\label{lem_CertainHypExists}
Let $\hat{X}$ be a contractive $G$-completion of a metric space $X$, let $U$ and $V$ be non-empty open subsets of~$\hat{X}$ with $d(U,V)>0$, $\overline{U}\cap \overline{V}=\es$, and $\overline{U}\cup\overline{V}\neq\hat{X}$, and let $g\in G$.
If $(\hat{X}\sm V)g\sub U$, then $g$ is hyperbolic with $g^+\in \overline{U}$ and $g^-\in \overline{V}$.
\end{lem}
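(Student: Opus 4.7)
The strategy is to extract two distinct boundary fixed points of $g$, one in $\overline{U}$ and one in $\overline{V}$, and then invoke Theorem~\ref{thm_W1} to conclude that $g$ is hyperbolic with these as $g^+$ and $g^-$.

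First, I would pick a point $z\in X\sm(\overline{U}\cup\overline{V})$, which exists by density of $X$ in $\hat{X}$ combined with $\overline{U}\cup\overline{V}\neq\hat{X}$. Since $\overline{U}\cap\overline{V}=\es$ forces $U\sub\hat{X}\sm V$, the hypothesis yields $Ug\sub U$; rewriting $(\hat{X}\sm V)g\sub U$ as $(\hat{X}\sm U)g^{-1}\sub V$ and arguing symmetrically gives $Vg^{-1}\sub V$. Iterating, $zg^n\in U$ for every $n\geq 1$ and $zg^{-n}\in V$ for every $n\geq 1$, and the positive orbit is injective (a coincidence $zg^k=z$ with $k\geq 1$ would place $z$ in $U$), so $\{zg^n:n\geq 1\}$ is an infinite subset of $U$.

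The hard part will be to justify $d(z,zg^n)\to\infty$, so that axiom~(\ref{item_CompletionIII}) applies to $(g^n)_{n\in\nat}$. Granting this, the axiom returns either (a)~a boundary accumulation point $\eta\in\rand X$ of $\{zg^n\}$, which must lie in $\overline{U}$, or (b)~an element $\gamma=g^m\in\langle g\rangle$ (with $m>0$ after relabelling) for which $z\gamma^k\to\eta$ and $z\gamma^{-k}\to\mu$ in $\rand X$. In case~(b) the iteration puts $\eta\in\overline{U}$ and $\mu\in\overline{V}$, so $\eta\neq\mu$, and Theorem~\ref{thm_W1} makes $\gamma$ hyperbolic; in particular $g$ itself is not elliptic. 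Commutation forces $g$ to permute the fixed set $\{\eta,\mu\}$, and a swap $\eta g=\mu$ would contradict $\overline{U}g\sub\overline{U}$ (a consequence of $Ug\sub U$ and continuity), so $g$ fixes both $\eta$ and $\mu$. In case~(a), a symmetric application of axiom~(\ref{item_CompletionIII}) to $(g^{-n})$ either drops back into case~(b) or yields a boundary accumulation $\mu\in\overline{V}$ of $\{zg^{-n}\}$; along subsequences $zg^{n_i}\to\eta$ and $zg^{-n_i}\to\mu$, continuity of $g$ together with projectivity (as in the proof of Theorem~\ref{thm_W1}) give $\eta g=\eta$ and $\mu g=\mu$, while Lemma~\ref{lem_NoLocalBP} rules out such boundary accumulation when $g$ is elliptic.

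In either case, $g$ is non-elliptic and fixes the two distinct boundary points $\eta\in\overline{U}$ and $\mu\in\overline{V}$, so Theorem~\ref{thm_W1} classifies $g$ as hyperbolic (parabolic is excluded because it fixes only one boundary point). The convergence $zg^{n_i}\to\eta$ along a subsequence $n_i\to\infty$ matches the direction statement of Theorem~\ref{thm_W1}, identifying $g^+=\eta\in\overline{U}$ and $g^-=\mu\in\overline{V}$.
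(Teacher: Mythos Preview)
You correctly flag the crux—proving $d(z,zg^n)\to\infty$—but then explicitly sidestep it with ``Granting this''. That is the one place where the hypothesis $d(U,V)>0$ must do real work, and leaving it open makes the proposal incomplete. The paper's proof handles exactly this point in one line: from the nesting $(\hat X\sm V)g^n\sub U$ and $(\hat X\sm U)g^{-n}\sub V$ (which you also derive) together with $g$ being an isometry, it asserts
\[
d(x,xg^n)\;\geq\;(n-1)\,d(U,V)+d(x,U),
\]
which forces the orbit to be unbounded. You should supply an argument for (some version of) this bound; your observation that the positive orbit is merely injective is not enough, since an injective orbit can still be bounded.

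Once unboundedness is established, your case analysis is correct but heavier than necessary. The paper proceeds directly: non-ellipticity plus Theorem~\ref{thm_W1} leaves only the parabolic and hyperbolic possibilities; axiom~(\ref{item_CompletionIII}) applied to the cyclic sequence $(g^n)_{n\in\nat}$—where both alternatives of that axiom produce boundary accumulation points for $\{xg^{\pm n}\}$—places one accumulation point in $\overline U$ and one in $\overline V$, hence two distinct ones; the last clause of Theorem~\ref{thm_W1} then excludes parabolic and identifies $g^+\in\overline U$, $g^-\in\overline V$. Your detour through a power $\gamma=g^m$, the commutation/permutation argument for the fixed set $\{\eta,\mu\}$, and the separate handling of cases~(a) and~(b) all rederive conclusions already packaged inside Theorem~\ref{thm_W1}; they work, but the paper's route is shorter.
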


\begin{proof}
First, we notice that $\hat{X}\sm U\sub Vg$ and hence $(\hat{X}\sm U)g\inv\sub V$.
As $U$ and $V$ are disjoint, we obtain inductively that $(\hat{X}\sm V)g^n\sub U$ and $(\hat{X}\sm U)g^{-n}\sub V$ for all $n\geq 1$.
Since $X$ is dense in~$\hat{X}$ and $\overline{U}\cap\overline{V}\neq\hat{X}$, we find an $x\in X\sm(U\cup V)$.
Let us show that the orbit of~$x$ under $g$ is not bounded.
Indeed, as $g$ is an automorphism, the inequality
\[
d(x,xg^n)\geq (n-1)d(U,V)+d(x,U)
\]
holds and shows that $g$ is not elliptic.
Thus, $g$ is either parabolic or hyperbolic according to Theorem~\ref{thm_W1}.
Due to~(\ref{item_CompletionIII}), the set $\{xg^n\mid n\in\nat\}$ has an accumulation point, which lies in~$\overline{U}$, and $\{xg^{-n}\mid n\in\nat\}$ has an accumulation point, which lies in~$\overline{V}$.
According to Theorem~\ref{thm_W1}, the automorphism $g$ cannot be parabolic, so it must be hyperbolic and we have $g^+\in\overline{U}$ and $g^-\in\overline{V}$.
\end{proof}

\begin{thm}\label{thm_W2}
Let $\hat{X}$ be a contractive $G$-completion of a metric space $X$.
\begin{enumerate}[{\em (i)}]
\item\label{item_W2_2} If $\L(G)$ has at least two elements, then $\H(G)$ is dense in~$\L(G)$.
\item\label{item_W2_1} The set $\L(G)$ has either none, one, two, or infinitely many elements.
\item\label{item_W2_3} The set $\H(G)$ has either none, two, or infinitely many elements.
\end{enumerate}
\end{thm}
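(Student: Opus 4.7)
\emph{Plan.} I would prove (i) first, derive (iii) by a short conjugation argument, and deduce (ii) from the combination.

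For (i), fix $\eta\in\L(G)$ and an open neighbourhood $W$ of~$\eta$; the goal is to produce a hyperbolic $g\in G$ with $g^+\in W$. Pick $x\in X$ and $(g_i)\sub G$ with $xg_i\to\eta$. The argument splits on the asymptotic behaviour of $\{xg_i\inv\}$. If this set has no accumulation point in $\rand X$, then property~(\ref{item_CompletionIIStronger}) produces a sequence $(h_j)$ in $\langle g_i\mid i\in\nat\rangle$ whose positive and negative orbit limits are distinct with the positive ones converging to~$\eta$; Theorem~\ref{thm_W1} then identifies each $h_j$ as hyperbolic and the desired density is immediate. Otherwise, after passing to a subsequence, $xg_i\inv\to\nu\in\rand X$. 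If $\nu\neq\eta$, then Lemma~\ref{lem_UVExistNicely} together with regularity furnishes an open $U$ with $\eta\in U\sub\overline U\sub W$ and an open neighbourhood $V$ of~$\nu$ with $d(U,V)>0$, $\overline U\cap\overline V=\es$ and $\overline U\cup\overline V\neq\hat X$; contractivity forces $(\hat X\sm V)g_i\sub U$ for large~$i$, and Lemma~\ref{lem_CertainHypExists} yields $g_i$ hyperbolic with $g_i^+\in\overline U\sub W$. The delicate subcase is $\nu=\eta$.

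For this subcase I exploit $|\L(G)|\geq 2$: pick $\mu\in\L(G)\sm\{\eta\}$ and $(h_k)\sub G$ with $xh_k\to\mu$. If some $s\in G$ satisfies $\eta s\neq\eta$, then $f_i:=sg_i\inv$ satisfies $xf_i\to\eta$ by projectivity (since $d(x,xs)$ is bounded and $xg_i\inv\to\eta$) while $xf_i\inv=xg_is\inv\to\eta s\inv\neq\eta$ by continuity of~$s\inv$, reducing to the preceding case with $\nu\neq\eta$. Otherwise $\eta$ is a global fixed point of~$G$; I then rerun the same case analysis on~$(h_k)$. Whenever that analysis produces a hyperbolic element, it lies in~$G$ and hence fixes~$\eta$; being hyperbolic, it has exactly two boundary fixed points, so $\eta$ is one of them and $\eta\in\H(G)\cap W$. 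The only configuration left is that $\mu$ too is globally fixed and $xh_k\inv\to\mu$. Here I take small disjoint neighbourhoods $U_\eta\ni\eta$, $U_\mu\ni\mu$ (Lemma~\ref{lem_UVExistNicely}) with $\overline{U_\eta}\sub W$, and apply contractivity twice (in its $\eta=\mu$ form) to get $(\hat X\sm U_\eta)g_i\sub U_\eta$ and $(\hat X\sm U_\mu)h_k\sub U_\mu$ for large $i,k$; composing and using $U_\eta\cap U_\mu=\es$ yields $(\hat X\sm U_\eta)g_ih_k\sub U_\mu$, so Lemma~\ref{lem_CertainHypExists} makes $g_ih_k$ hyperbolic with $(g_ih_k)^-\in\overline{U_\eta}$, and inverting gives a hyperbolic element with direction in~$\overline{U_\eta}\sub W$.

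For (iii), each hyperbolic $g$ has $g^+\neq g^-$, so $|\H(G)|\neq 1$. If $|\H(G)|\geq 3$, choose hyperbolic $g,h\in G$ with $h^+\notin\{g^+,g^-\}$; the conjugates $g^nhg^{-n}$ are hyperbolic with boundary fixed points $h^{\pm}g^{-n}$, and since $h^+$ is fixed by no nontrivial power of~$g$, the points $h^+g^{-n}$ for $n\in\nat$ are pairwise distinct elements of~$\H(G)$, forcing $|\H(G)|=\infty$. Finally (ii): if $|\L(G)|\geq 3$, then (i) makes $\H(G)$ a dense subset of $\L(G)$ of cardinality at least~$3$, hence $|\H(G)|=\infty$ by~(iii), and therefore $|\L(G)|=\infty$. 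The main obstacle throughout is the degenerate subcase of~(i) where $\nu=\eta$ and $\eta$ is globally fixed; it is exactly this that forces the ping-pong composition with the auxiliary sequence at~$\mu$.
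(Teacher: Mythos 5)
Your proposal is correct and follows the paper's overall strategy in all three parts: the same case split on the accumulation behaviour of $\{xg_i\inv\}$, the same use of condition~(\ref{item_CompletionIIStronger}), of Lemma~\ref{lem_CertainHypExists}, and of the contractive ping-pong for~(\ref{item_W2_2}), and essentially the same conjugation-by-powers argument for~(\ref{item_W2_1}) and~(\ref{item_W2_3}). The one place you genuinely deviate is the intermediate subcase of~(\ref{item_W2_2}) where $xg_i\inv\to\eta$: the paper handles it by running the argument at the second limit point $\mu$ to produce a hyperbolic $f$ near~$\mu$ and then conjugating, pushing $f^+g_n$ into~$U$ by contractivity; you instead first test whether some $s\in G$ moves~$\eta$, in which case the translated sequence $sg_i\inv$ (which still tends to $\eta$ by projectivity while its inverses tend to $\eta s\inv\neq\eta$) reduces you to the case $\nu\neq\eta$, and if $\eta$ is globally fixed you observe that any hyperbolic element of~$G$ must have $\eta$ among its two fixed boundary points, so $\eta\in\H(G)$ outright. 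Both routes are sound; yours isolates the global-fixed-point situation cleanly, while the paper's avoids the extra dichotomy. Two small points to write out in a full version: in~(\ref{item_W2_3}), the claim that no nontrivial power of~$g$ fixes $h^+$ needs the observation (via Theorem~\ref{thm_W1}) that $g^k$ is again non-elliptic with the same fixed boundary points $g^{\pm}$; and wherever you invoke Lemma~\ref{lem_CertainHypExists} you should check, as the paper does implicitly through the construction in Lemma~\ref{lem_UVExistNicely}, that $\overline{U}\cup\overline{V}\neq\hat{X}$.
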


\begin{proof}
To prove (\ref{item_W2_2}), let $\eta,\mu\in\L(G)$ be distinct and let $x\in X$.
Then there are sequences $\sequ{g}$ and $\sequ{h}$ in~$G$ with $xg_i\to\eta$ and $xh_i\to\mu$.
We show that in any neighbourhood of~$\eta$ we find a direction of a hyperbolic element.

We may assume that $(xg_i\inv)_{i\in\nat}$ has at most one accumulation point: if it has more than one, then we take a subsequence of $(g_i)_{i\in\nat}$ such that $(xg_i\inv)_{i\in\nat}$ converges in~$\hat{X}$.
Thus, we find an open neighbourhood $U$ of~$\eta$ with $x\notin U$ and such that $\overline{U}$ does not contain any accumulation point of~$(xg_i\inv)_{i\in\nat}$.
If $(xg_i\inv)_{i\in\nat}$ has no accumulation point, then we find with condition (\ref{item_CompletionIIStronger}) a sequence $(f_j)_{j\in\nat}$ of hyperbolic automorphisms such that $f_j^+\to\eta$ for $j\to\infty$.
Thus, we may assume that $(xg_i\inv)_{i\in\nat}$ converges to $\nu\in\rand X$.

We distinguish several cases.
First, let us assume that $\nu\neq\eta$.
Due to Lemma~\ref{lem_UVExistNicely}, we find for $U$ an open neighbourhood $V$ of~$\nu$ with $\overline{U}\cap\overline{V}=\es$, with ${d(U,V)>0}$, and with $x\notin V$.
As $\hat{X}$ is contractive, there is an $n\in\nat$ with $(\hat{X}\sm V)g_i\sub U$ for all $i\geq n$.
According to Lemma~\ref{lem_CertainHypExists}, for all $i\geq n$, the automorphism $g_i$ is hyperbolic with $g_i^+\in \overline{U}$ and $g_i^-\in V$.
So we have found directions of hyperbolic automorphism arbitrarily close to~$\eta$.

In the situation that $\sequ{xh\inv}$ does not have $\mu$ as an accumulation point, an analogous proof as above gives us a direction of a hyperbolic automorphism ${f\in\langle h_i\mid i\in\nat\rangle}$ in every neighbourhood of~$\mu$.
If either $f^+=\eta$ or $f^-=\eta$, then $\eta$ itself is a direction of a hyperbolic element.
Hence, we may assume that $f^+\neq\eta$ and we may also assume that $f^+\neq\nu$ by taking $f\inv$ instead of~$f$.
Applying contractivity, we obtain that $f^+g_n\in U$ for all $n\geq n_0$ for some $n_0\in\nat$.
As $f^+g_n$ is the direction of the hyperbolic automorphism $g_nfg_n\inv$, we obtain the direction of a hyperbolic automorphism in~$U$, too.

Let us now assume that $\sequ{xg\inv}$ converges to~$\eta$ and that $\sequ{xh\inv}$ converges to~$\mu$.
As $\eta\neq\mu$, there are again open neighbourhoods $U$ and~$V$ of~$\eta$ and~$\mu$, respectively, with $x\notin\overline{U}\cup\overline{V}$, with $d(U,V)>0$, and with $\overline{U}\cap\overline{V}=\es$ due to Lemma~\ref{lem_UVExistNicely}.
By contractivity, we find an $n\in\nat$ such that
\[(\hat{X}\sm U)g_i\sub U\quad\text{ and }\quad (\hat{X}\sm V)h_i\sub V\]
for all $i\geq n$.
For $f:=h_ng_n$ this implies
\[(\hat{X}\sm V)f\sub V g_n\sub U.\]
By Lemma~\ref{lem_CertainHypExists}, the automorphism $f$ is hyperbolic with $f^+\in \overline{U}$ and $f^-\in \overline{V}$.
This finishes the last case and shows that $\H(G)$ is dense in~$\L(G)$ as soon as $\L(G)$ has at least two elements.

\medskip

For the proof of~(\ref{item_W2_1}) and (\ref{item_W2_3}), let us assume that $\L(G)$ contains at least three elements.
As $\H(G)$ is dense in $\L(G)$ according to~(\ref{item_W2_2}) and as $\hat{X}$ is Hausdorff, there are two hyperbolic automorphisms $g$ and~$h$ that do not fix the same two boundary points of~$X$.
Let $\eta\in\rand X$ with $\eta g=\eta$ and $\eta h\neq\eta$.
Then due to contractivity, the sequence $(\eta h^n)_{n\in\nat}$ converges to~$h^+$.
Hence, the set $\{\eta h^n\mid n\in\nat\}$ is infinite.
On the other hand, the boundary point $\eta h^n$ is fixed by $h^{-n} g h^n$ which is, as it is conjugated to a hyperbolic automorphism, also hyperbolic.
Hence $\H(G)$ and $\L(G)$ are infinite.
Since every hyperbolic automorphism fixes two boundary points, we also have $|\H(G)|\neq 1$.
\end{proof}

A group $G$ acts \emph{discontinuously} on a metric space $X$, if there is a non-empty open subset $O\sub X$ with $Og\cap O=\emptyset$ for all non-trivial elements $g$ of~$G$.

\begin{thm}\label{thm_W3}
Let $\hat{X}$ be a contractive $G$-completion of a metric space $X$.
Then one of the following cases holds:
\begin{enumerate}[{\em(i)}]
\item\label{item_W3_2} $G$ fixes a bounded subset of~$X$;
\item\label{item_W3_3} $G$ fixes a unique element of~$\L(G)$;
\item\label{item_W3_4} $\L(G)$ consists of precisely two elements;
\item\label{item_W3_1} $G$ contains two hyperbolic elements that have no common fixed point and that freely generate a free subgroup of~$G$ that contains aside from the identity only hyperbolic elements and that acts discontinuously on~$X$.
\end{enumerate}
\end{thm}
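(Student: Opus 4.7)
The plan is to case-split on $|\L(G)|$, which lies in $\{0,1,2,\infty\}$ by Theorem~\ref{thm_W2}(\ref{item_W2_1}). If $\L(G)=\es$, I claim every orbit $xG$ is bounded, since any $\sequ{g}$ with $d(x,xg_i)\to\infty$ would, via condition~(\ref{item_CompletionIII}), force accumulation in~$\rand X$ (either of $\{xg_i\}$ itself or of the iterates of some $\gamma$); thus $xG$ is a bounded $G$-invariant set, giving~(\ref{item_W3_2}). If $|\L(G)|=1$, $G$-invariance of $\L(G)$ fixes the unique point, giving~(\ref{item_W3_3}); and $|\L(G)|=2$ is~(\ref{item_W3_4}) by definition. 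The substantive case is $|\L(G)|=\infty$.

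Assume $|\L(G)|=\infty$. I first observe that $G$ fixes at most one element of $\L(G)$: two fixed points $\eta\neq\mu$ would force every hyperbolic element to have fixed-point pair $\{\eta,\mu\}$, so $\H(G)\sub\{\eta,\mu\}$; by density (Theorem~\ref{thm_W2}(\ref{item_W2_2})) and closedness of finite sets in the Hausdorff $\hat X$, this would give $\L(G)\sub\{\eta,\mu\}$, a contradiction. If $G$ fixes exactly one element, case~(\ref{item_W3_3}) holds, so I may assume $G$ fixes no element of $\L(G)$. The goal is then two hyperbolic elements with disjoint fixed-point pairs. Consider $\mathcal P=\{\{h^+,h^-\}:h\in G\text{ hyperbolic}\}$, a $G$-invariant family with $\H(G)=\bigcup\mathcal P$ infinite (a finite $\H(G)$ would be closed and force $\L(G)$ finite by density). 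Suppose for contradiction that any two pairs in $\mathcal P$ meet. Then either $\bigcap\mathcal P\neq\es$---a $G$-invariant subset of $\L(G)$ of size $1$ or $2$; size $1$ contradicts the absence of $G$-fixed points, while size $2$ collapses $\H(G)$ onto that pair, contradicting $|\H(G)|=\infty$---or $\bigcap\mathcal P=\es$, in which case picking $P_1=\{a,b\}$ yields some $P_2=\{b,c\}$ missing $a$ and some $P_3=\{a,c\}$ missing $b$, and a short case-check shows every further $P\in\mathcal P$ must lie inside $\{a,b,c\}$, forcing $\H(G)\sub\{a,b,c\}$ and contradicting infiniteness. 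This produces hyperbolic $g,h\in G$ with $\{g^+,g^-\}\cap\{h^+,h^-\}=\es$.

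The concluding step is ping-pong for~(\ref{item_W3_1}). Using Lemma~\ref{lem_UVExistNicely} iteratively, I choose pairwise disjoint open neighborhoods $U^+,U^-,V^+,V^-$ of $g^+,g^-,h^+,h^-$ with pairwise disjoint closures at pairwise positive distance, small enough that some $\xi\in\L(G)$ sits outside all four closures. Applying contractivity to $(g^n)_{n\in\nat}$ and $(h^n)_{n\in\nat}$ yields an $N$ with $(\hat X\sm U^-)g^N\sub U^+$, $(\hat X\sm U^+)g^{-N}\sub U^-$, and analogously for $h^{\pm N}$. The classical ping-pong lemma then shows $F:=\langle g^N,h^N\rangle$ is free of rank~$2$. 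For each nontrivial reduced $w\in F$, I would first conjugate to cyclically reduced form $w=s_1\cdots s_k$ (hyperbolicity is a conjugacy invariant by Theorem~\ref{thm_W1}); induction on~$k$ gives $(\hat X\sm R_{s_1^{-1}})w\sub R_{s_k}$, where $R_t\in\{U^\pm,V^\pm\}$ is the image region of the letter~$t$, and cyclic reducedness ensures $R_{s_1^{-1}}\neq R_{s_k}$, so Lemma~\ref{lem_CertainHypExists} yields hyperbolicity. For discontinuity, take a nonempty open $O\sub X$ near $\xi$ disjoint from the four closures; the same tracing gives $Ow\sub R_{s_k}$ disjoint from $O$ for every nontrivial $w\in F$. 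The main obstacle will be the Helly-style combinatorial step, which must simultaneously rule out the triangle configuration (via infinitude of $\H(G)$) and the common-fixed-point configuration (via absence of $G$-fixed points in $\L(G)$).
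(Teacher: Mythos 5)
Your proposal is correct, and its overall skeleton (trichotomy on $|\L(G)|$ via Theorem~\ref{thm_W2}, then ping-pong once two hyperbolic elements with disjoint fixed-point pairs are found) matches the paper's. The genuine difference is in the middle step. Where the paper handles the case ``every two hyperbolic elements share a fixed point'' by a dynamical argument --- conjugating by high powers of a hyperbolic element and using contractivity twice to manufacture a contradiction, then a further analysis of how $G$ permutes $\{g^+,g^-\}$ to extract the global fixed point --- you replace all of this by a purely combinatorial Helly-type analysis of the $G$-invariant family $\mathcal P$ of fixed-point pairs: pairwise-intersecting $2$-sets either share a common element (a $G$-fixed singleton or a $2$-element $\H(G)$, both excluded) or form a triangle forcing $|\H(G)|\le 3$. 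This is shorter, avoids contractivity entirely in that step, and isolates exactly which facts are used ($\H(G)$ infinite, $\mathcal P$ conjugation-invariant); the paper's route is longer but constructs the witnesses explicitly. Your ping-pong bookkeeping also differs cosmetically: you track four regions and reduce to cyclically reduced words by conjugation, whereas the paper lumps $U_i\cup V_i$ into two regions $W_1,W_2$ and splits words by the type of their first and last syllables --- the two are equivalent, and both correctly invoke Lemma~\ref{lem_CertainHypExists} to get hyperbolicity of every nontrivial element and $Ow\cap O=\es$ for the discontinuity claim.
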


\begin{proof}
First, let us assume that $G$ does not contain any hyperbolic automorphism.
Then Theorem~\ref{thm_W2}\,(\ref{item_W2_2}) implies that $|\L(G)|\leq 1$.
If $|\L(G)|=1$, then the unique element of~$\L(G)$ has to be fixed by~$G$ which shows that (\ref{item_W3_3}) is true in this situation.
Thus, we may assume that $\L(G)$ is empty.
According to~(\ref{item_CompletionIII}), the set $xG$ must be bounded for any $x\in X$.
So~(\ref{item_W3_2}) holds.

\medskip

Let us now assume that $G$ contains a hyperbolic automorphism.
Then we have $|\L(G)|\geq 2$.
If $|\L(G)|=2$, then (\ref{item_W3_3}) holds.
So we may assume that $|\L(G)|\neq 2$.
Thus, we have $|\H(G)|>2$, since $\H(G)$ is dense in~$\L(G)$ due to Theorem~\ref{thm_W2}\,(\ref{item_W2_2}).
So $G$ contains more than one hyperbolic element.
We shall show that either (\ref{item_W3_3}) or (\ref{item_W3_1}) holds.

Let us first consider the case that each two hyperbolic automorphisms have a common fixed point.
Then we shall show the existence of a boundary point in~$\L(G)$ that is fixed by all elements of~$G$.
Suppose that no such fixed point exists.
Let $g\in G$ be hyperbolic.
As $G$ contains more than one hyperbolic element that have in total more than two distinct directions, we know that $\{g^+,g^-\}$ is not $G$-invariant.
For every $h\in G$, the automorphism $h\inv gh$ is hyperbolic.
As each two hyperbolic automorphisms have a common fixed point, either $g^+h=(h\inv gh)^+$ or $g^-h=(h\inv gh)^-$ lies in $\{g^+,g^-\}$, in particular, we have $\{g^+,g^-\}h\cap\{g^+,g^-\}\neq\es$.

Let us suppose that there are $h_1,h_2\in G$ with
\[\{g^+,g^-\}h_1\cap \{g^+,g^-\}=\{g^+\}\quad\text{ and }\quad \{g^+,g^-\}h_2\cap \{g^+,g^-\}=\{g^-\}.\]
The automorphisms $g_i:=h_i\inv gh_i$ for $i=1,2$ are hyperbolic and hence have a common fixed point $\eta$.
But this fixed point can be neither $g^+$ nor~$g^-$ by the choices of~$h_1$ and~$h_2$.
Let $U$ and $V$ be disjoint open neighbourhoods of~$g^+$ and $g^-$, respectively, such that none of them contains~$\eta$.
As $\hat{X}$ is contractive, there is an $n\in\nat$ such that $(\hat{X}\sm V)g^n\sub U$ and $(\hat{X}\sm U)g^{-n}\sub V$.
Again, the automorphisms $f_1:=g^{-n}g_1g^n$ and $f_2:=g^ng_2g^{-n}$ are both hyperbolic and we have
\[\{f_1^+,f_1^-\}=\{\eta g^n,g^+ g^n\}\sub U\quad\text{ and }\quad\{f_2^+,f_2^-\}=\{\eta g^{-n},g^-g^{-n}\}\sub V\]
which implies that $f_1$ and $f_2$ have no common fixed point even though they are hyperbolic.
This contradiction shows that there is a $\mu\in\rand X$ that lies in $\{g^+,g^-\}f$ for all $f\in G$.
Let $\nu$ be the other element of $\{g^+,g^-\}$.

Since $G$ fixes no element of~$\L(G)$, there is an $f\in G$ with $\mu f\neq\mu$.
Then we have $\nu f=\mu$ and $\nu f^2=\mu f\neq\mu$.
As $\mu\in\{\mu,\nu\}f^2$, we conclude that $\mu f^2=\mu$.
Since $f$ is a homeomorphism on~$\hat{X}$ and $\nu f=\mu f^2$, we have $\mu f=\nu$.
Because of $|\L(G)|\neq 2$, there is a hyperbolic automorphism $f'$ in~$G$ with precisely one fixed point in $\{g^+,g^-\}$, as any two hyperbolic automorphisms have a common fixed point.
If this fixed point is~$\nu$, then we conclude $\mu f'=\mu$ as $\mu\in\{\mu,\nu\}f'$.
By the choice of~$f'$, this is not possible.
So $f'$ fixes $\mu$.
Hence, the automorphism $f'f$ maps $\mu$ to~$\nu$ and $\nu$ to~$\nu f'f\neq \nu f=\mu$.
So $\mu$ does not lie in $\{g^+,g^-\}f'f$.
This contradiction shows that a unique element of~$\L(G)$ is fixed by~$G$ in the situation that each two hyperbolic automorphisms have a common fixed point.

\medskip

Let us consider the remaining case, that is, that there are two hyperbolic elements $g$ and $h$ in~$G$ without common fixed point.
We shall show that there is a $k\geq 1$ such that $g^k$ and $h^k$ satisfy the condition~(\ref{item_W3_1}).
Let $U_1$, $V_1$, $U_2$, and $V_2$ be open neighbourhoods in~$\hat{X}$ of~$g^-$, $g^+$, $h^-$, and $h^+$, respectively, that have pairwise positive distance from each other, such that their closures are disjoint and such that
\[\overline{U_1}\cap\overline{U_2}\cap\overline{V_1}\cap\overline{V_2}\neq\hat{X}.\]
We can find these neighbourhoods similarly as in the proof of Lemma~\ref{lem_UVExistNicely}.
Let $O$ be a non-empty open subset of~$X$ that is disjoint from all four just defined subsets of~$\hat{X}$.
As $\hat{X}$ is contractive, there is an $n_0\geq 1$ with
\[(\hat{X}\sm U_1)g^n\sub V_1\quad\text{ and }\quad (\hat{X}\sm V_1)g^{-n}\sub U_1\]
as well as
\[(\hat{X}\sm U_2)h^n\sub V_2\quad\text{ and }\quad (\hat{X}\sm V_2)h^{-n}\sub U_2\]
for all $n\geq n_0$.
Set $f_1:=g^{n_0}$ and $f_2:=h^{n_0}$.
We shall show that $f_1$ and $f_2$ freely generate $F:=\langle f_1,f_2\rangle$ and that this group acts discontinuously on~$X$.

Let $W_1:= U_1\cup V_1$ and $W_2:= U_2\cup V_2$.
We consider any non-trivial word $f$ in~$F$ and show that it does not represent the trivial element.
In order to show this, we distinguish several cases.

Let us first assume that $f=f_1^{k_1}f_2^{\ell_1}\ldots f_1^{k_m}f_2^{\ell_m}$ for integers $k_i,\ell_i\neq 0$.
Then we conclude
\[(\hat{X}\sm W_1)f=(\hat{X}\sm W_1)f_1^{k_1}f_2^{\ell_1}\ldots f_1^{k_m}f_2^{\ell_m}\]
\[
\sub W_1f_2^{\ell_1}\ldots f_1^{k_m}f_2^{\ell_m}\sub\ldots\sub W_1f_2^{\ell_m}\sub W_2\]
and analogously $(\hat{X}\sm W_2)f\inv\sub W_1$.
We apply Lemma~\ref{lem_CertainHypExists} and thus, know that $f$ is a hyperbolic automorphism with $f^+\in W_2$ and $f^-\in W_1$, in particular, $f$ is not the identity element and $Of\cap O\sub W_2\cap O=\es$.

If $f=f_2^{k_1}f_1^{\ell_1}\ldots f_2^{k_m}f_1^{\ell_m}$, then an analogue proof shows that $f$ is hyperbolic with $f^+\in W_1$ and $f^-\in W_2$ and $Of\cap O=\es$.

Let us now assume that $f=f_1^{k_1}f_2^{\ell_1}\ldots f_1^{k_m}$ for integers $k_i,\ell_i\neq 0$.
First, we notice that $f=f_1^{-k_m}f'f_1^{k_m}$ with $f'=f_1^{k_m+k_1}f_2^{\ell_1}\ldots f_2^{\ell_{m-1}}$ and hence, that $f$~-- as a conjugate of a hyperbolic automorphism~-- is also hyperbolic.
Then we conclude as above that $\{f^+,f^-\}=\{f'^+,f'^-\}\sub W_1$ and $Of\cap O=\es$.

Again, the case $f=f_2^{k_1}f_1^{\ell_1}\ldots f_2^{k_m}$ is analogue to the previous one.
Thus, we have shown that the hyperbolic automorphisms $f_1$ and $f_2$ satisfy the condition~(\ref{item_W3_1}).
\end{proof}

The hyperbolic limit set is \emph{bilaterally dense} in~$\L(G)$ if $\H(G)$ is not empty and if for any two disjoint non-empty open sets $A,B\sub \L(G)$ there is a hyperbolic element of~$G$ such that its direction lies in~$A$ and the direction of its inverse lies in~$B$.

\begin{thm}\label{thm_P5}
Let $\hat{X}$ be a contractive $G$-completion of a metric space~$X$.
The following statements are equivalent.
\begin{enumerate}[{\em (a)}]
\item\label{item_P5_1} The hyperbolic limit set of~$G$ is bilaterally dense in~$\L(G)$.
\item\label{item_P5_2} Either $|\L(G)|=2$ or $G$ contains two hyperbolic elements that have no common fixed point.
\end{enumerate}
\end{thm}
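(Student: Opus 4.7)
The plan is to prove (\ref{item_P5_1})$\Rightarrow$(\ref{item_P5_2}) by a density argument and (\ref{item_P5_2})$\Rightarrow$(\ref{item_P5_1}) by a three-term ping-pong construction built on Lemma~\ref{lem_CertainHypExists}.

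For (\ref{item_P5_1})$\Rightarrow$(\ref{item_P5_2}), bilateral density forces $\H(G)\neq\es$, so $|\L(G)|\geq 2$. If $|\L(G)|\neq 2$, then $\L(G)$ is infinite by Theorem~\ref{thm_W2}\,(\ref{item_W2_1}); I would then pick four distinct points of~$\L(G)$ with pairwise disjoint open neighbourhoods $A_1,\ldots,A_4$ in~$\L(G)$ (using regularity of~$\hat{X}$) and apply bilateral density to the pairs $(A_1,A_2)$ and $(A_3,A_4)$ to obtain two hyperbolic elements whose four directions are pairwise distinct and which therefore share no fixed point.

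For (\ref{item_P5_2})$\Rightarrow$(\ref{item_P5_1}), I first handle $|\L(G)|=2$: the opening argument of the proof of Theorem~\ref{thm_W3} shows that $G$ contains a hyperbolic $g$, and since $\{g^+,g^-\}\sub\L(G)$ with $g^+\neq g^-$ one has $\{g^+,g^-\}=\L(G)$, so bilateral density is immediate. Otherwise let $g,h\in G$ be hyperbolic without common fixed point; then $\L(G)$ is infinite and $\H(G)$ is dense in~$\L(G)$ by Theorem~\ref{thm_W2}\,(\ref{item_W2_2}). Given disjoint nonempty open $A,B\sub\L(G)$, density supplies hyperbolic $h_2\in G$ with $h_2^+\in A$ and hyperbolic $h_1\in G$ with $h_1^-\in B$.

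The core construction is the three-term product $f:=h_1^n\,\tilde{g}^m\,h_2^n$, where $\tilde{g}\in\{g^{\pm 1},h^{\pm 1}\}$ is chosen so that $\tilde{g}^+\neq h_2^-$ and $\tilde{g}^-\neq h_1^+$. Such a $\tilde{g}$ exists because the four attractors $g^+,g^-,h^+,h^-$ are pairwise distinct, so each forbidden equality rules out at most one of the four candidates. Using Lemma~\ref{lem_UVExistNicely} and regularity, I would pick open neighbourhoods $N^{\pm}_{h_1},N^{\pm}_{\tilde{g}},N^{\pm}_{h_2}$ of the six boundary points so that $\overline{N^+_{h_2}}\sub A$, $\overline{N^-_{h_1}}\sub B$, $N^+_{h_1}\cap N^-_{\tilde{g}}=\es$, $N^+_{\tilde{g}}\cap N^-_{h_2}=\es$, and $\overline{N^+_{h_2}}\cap\overline{N^-_{h_1}}=\es$ with $d(N^+_{h_2},N^-_{h_1})>0$. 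Contractivity of~$\hat{X}$ then gives, for sufficiently large $n,m$, the chain of inclusions
\[
(\hat{X}\sm N^-_{h_1})h_1^n\sub N^+_{h_1},\qquad N^+_{h_1}\tilde{g}^m\sub N^+_{\tilde{g}},\qquad N^+_{\tilde{g}}h_2^n\sub N^+_{h_2},
\]
whence $(\hat{X}\sm N^-_{h_1})f\sub N^+_{h_2}$; Lemma~\ref{lem_CertainHypExists} then yields that $f$ is hyperbolic with $f^+\in\overline{N^+_{h_2}}\sub A$ and $f^-\in\overline{N^-_{h_1}}\sub B$, establishing bilateral density.

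The main obstacle I anticipate is that $h_1$ and $h_2$ may share a fixed point lying outside $A\cup B$, which would derail a naive two-term ping-pong $h_1^n h_2^n$. Inserting the auxiliary factor $\tilde{g}^m$, drawn from the pair guaranteed by~(\ref{item_P5_2}), is precisely what bypasses this degeneracy in a uniform way.
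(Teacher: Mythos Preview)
Your proof is correct, and the direction (\ref{item_P5_1})$\Rightarrow$(\ref{item_P5_2}) matches the paper's argument essentially verbatim. The difference lies in how you handle the non-trivial case of (\ref{item_P5_2})$\Rightarrow$(\ref{item_P5_1}).

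The paper proceeds in two stages: it first \emph{localizes} hyperbolic elements by conjugation. Given $\eta\in A$, it takes a hyperbolic $a$ with $a^+$ near $\eta$, finds (from the no-common-fixed-point hypothesis) a hyperbolic $b$ fixing neither $a^+$ nor $a^-$, and conjugates: $g':=a^{-n}ba^n$ then has \emph{both} fixed points inside a small neighbourhood $U\sub A$. Doing the same on the $B$-side yields $h'$ with $\{h'^+,h'^-\}\sub V\sub B$, and a simple two-term product $f=h'^m g'^m$ finishes via Lemma~\ref{lem_CertainHypExists}. Your route skips the conjugation/localization entirely and builds a single three-term word $f=h_1^n\,\tilde g^m\,h_2^n$, where the middle factor $\tilde g$ drawn from $\{g^{\pm1},h^{\pm1}\}$ plays the role of a bridge that absorbs the uncontrolled directions $h_1^+$ and $h_2^-$. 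Your counting argument for the existence of $\tilde g$ (each forbidden equality kills at most one of four candidates, since $g^+,g^-,h^+,h^-$ are pairwise distinct) is clean and correct.

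Both approaches exploit the same ingredients (density of $\H(G)$, contractivity, Lemma~\ref{lem_CertainHypExists}) and are of comparable length. The paper's version isolates a reusable sublemma (``there is a hyperbolic element with both fixed points in any prescribed open set meeting $\L(G)$''), which is conceptually tidy; yours is more direct and avoids conjugation altogether, at the price of tracking one extra neighbourhood in the ping-pong chain. One small point to make explicit when you write it out: to invoke Lemma~\ref{lem_CertainHypExists} you also need $\overline{N^+_{h_2}}\cup\overline{N^-_{h_1}}\neq\hat X$, but this is immediate from Lemma~\ref{lem_UVExistNicely} (which you already cite) by prescribing a point of $X$ to avoid.
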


Note that Theorem~\ref{thm_P5}\,(\ref{item_P5_2}) says that in Theorem~\ref{thm_W3} either (\ref{item_W3_4}) or~(\ref{item_W3_1}) holds.

\begin{proof}[Proof of Theorem~\ref{thm_P5}]
Let us assume that (\ref{item_P5_1}) holds and that $|\L(G)|\neq 2$.
As $\H(G)\neq \es$ by the definition of bilateral denseness, we know that $\L(G)$ and $\H(G)$ are infinite according to Theorem~\ref{thm_W2}\,(\ref{item_W2_1}) and~(\ref{item_W2_3}).
As $\hat{X}$ is Hausdorff, we may take four pairwise disjoint open subsets $V_1,\ldots,V_4$ of~$\rand X$ and conclude that there are two hyperbolic elements $g,h$ in~$G$ with $g^+\in V_1$, $g^-\in V_2$, $h^+\in V_3$, and $h^-\in V_4$.
Obviously, these two hyperbolic automorphisms have no common fixed point.

\medskip

To show the converse, let us assume that (\ref{item_P5_2}) holds.
For every open neighbourhood $Y$ in~$\L(G)$ of any element $\eta\in\L(G)$, there is a neighbourhood $Y'$ in~$\hat{X}$ with $Y'\cap\L(G)\sub Y$ as $\hat{X}$ is regular.
Thus, we may take disjoint non-empty open subsets $A$ and $B$ of~$\hat{X}$ with $A':=A\cap\L(G)\neq\es$ and $B':= B\cap\L(G)\neq\es$ and just have to show that there is a hyperbolic element $f$ in~$G$ with $f^+\in A'$ and $f^-\in B'$.

If $|\L(G)|=2$, then each of the two sets $A'$ and $B'$ consists of precisely one point and according to Theorem~\ref{thm_W2}\,(\ref{item_W2_2}) there is a hyperbolic element $f$ in~$G$ with $f^+\in A'$.
This implies $f^-\in B'$.
Hence, we may assume that $G$ contains two hyperbolic elements without common fixed point.

Let $\eta\in A'$ and $\mu\in B'$, let $U$ be an open neighbourhood of~$\eta$ with $\overline{U}\sub A$, and let $V$ be an open neighbourhood of~$\mu$ with $\overline{V}\sub B$ such that $d(U,V)>0$, $\overline{U}\cap\overline{V}=\es$, and $\overline{U}\cup\overline{V}\neq\hat{X}$.
For the existence of~$U$ and~$V$, we refer again to the proof of Lemma~\ref{lem_UVExistNicely}.
Our next aim is to show that there are hyperbolic elements $g,h\in G$ with $g^+,g^-\in U$ and $h^+,h^-\in V$.
As $\H(G)$ is dense in~$\L(G)$, we find a hyperbolic automorphism $a$ in~$G$ with $a^+\in U$.
Since there are two hyperbolic elements in~$G$ without common fixed point, we find a hyperbolic automorphism $b$ that fixes neither~$a^+$ nor~$a^-$.
Applying contractivity to open neighbourhoods $U'$ and~$V'$ of~$a^+$ and $a^-$, respectively, with $U'\sub U$ we obtain an $n\in\nat$ with $b^+a^n\in U$ and $b^-a^n\in U$.
Let $g=a^{-n}ba^n$.
Then $g$ is hyperbolic as it is conjugated to a hyperbolic automorphism and for every $x\in\hat{X}\sm U$ we have
\[xg^m=xa^{-n}b^ma^n\to b^+a^n=g^+\text{ for }m\to\infty\]
and
\[xg^{-m}=xa^{-n}b^{-m}a^n\to b^-a^n=g^-\text{ for }m\to\infty.\]
Thus, $g^+$ and $g^-$ lie in~$U$.
Analogously, we find a hyperbolic element $h$ of~$G$ with $h^+,h^-\in V$.

By contractivity, there is an $m\in\nat$ with $xg^m\in U$ and $xg^{-m}\in U$ for all $x\in \hat{X}\sm U$ as well as $xh^m\in V$ and $xh^{-m}\in V$ for all $x\in\hat{X}\sm V$.
Let $f=h^mg^m$.
Then we conclude
\[xf=xh^mg^m\in Vg^m\sub U\]
for all $x\in \hat{X}\sm V$ and
\[xf\inv=xg^{-m}h^{-m}\in Uh^{-m}\sub V\]
for all $x\in \hat{X}\sm U$.
As $d(U,V)>0$, $\overline{U}\cap\overline{V}=\es$, and $\overline{U}\cup\overline{V}\neq\hat{X}$, Lemma~\ref{lem_CertainHypExists} implies that $f$ is hyperbolic with $f^+\in \overline{U}$ and $f^-\in\overline{V}$ as desired.
\end{proof}

\begin{thm}\label{thm_P7}
Let $\hat{X}$ be a contractive $G$-completion of a metric space $X$ and such that $\L(G)$ is infinite.
Then $\L(G)$ is a perfect set.

Furthermore, the following statements are equivalent.
\begin{enumerate}[{\em (a)}]
\item\label{item_P7_2} The set $\{(g^+,g^-)\mid g\in G,\, g\text{ is hyperbolic}\}$ is dense in $\L(G)\times \L(G)$.
\item\label{item_P7_1} The hyperbolic limit set of~$G$ is bilaterally dense in~$\L(G)$.
\item\label{item_P7_3} There are two hyperbolic elements in~$G$ that have no common fixed point.
\end{enumerate}
\end{thm}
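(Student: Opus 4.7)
The plan is to attack the theorem in two installments: first showing that $\L(G)$ is perfect, then establishing the cycle (\ref{item_P7_2})$\Rightarrow$(\ref{item_P7_1})$\Rightarrow$(\ref{item_P7_3})$\Rightarrow$(\ref{item_P7_2}).

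For perfectness, since $\L(G)$ is closed in~$\rand X$ by definition, I only need to exclude isolated points. I fix $\eta\in\L(G)$ and split into two cases. If $\eta\notin\H(G)$, then Theorem~\ref{thm_W2}\,(\ref{item_W2_2}) furnishes points of $\H(G)\sub\L(G)\sm\{\eta\}$ in every neighbourhood of~$\eta$, so $\eta$ is a limit point. If instead $\eta=g^+$ for some hyperbolic $g\in G$, I pick $\nu\in\L(G)\sm\{g^+,g^-\}$, which exists because $\L(G)$ is infinite, and apply contractivity: since $\nu\neq g^-$, the sequence $(\nu g^n)_{n\in\nat}$ converges to~$\eta$, while each term lies in $\L(G)\sm\{\eta\}$ because $\nu g^n=\eta$ would force $\nu=\eta g^{-n}=\eta$, contradicting the choice of~$\nu$.

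For the equivalences, (\ref{item_P7_1})$\Leftrightarrow$(\ref{item_P7_3}) is handed over by Theorem~\ref{thm_P5}, whose disjunction ``$|\L(G)|=2$ or~(\ref{item_P7_3})'' collapses to (\ref{item_P7_3}) under our infiniteness assumption. The implication (\ref{item_P7_2})$\Rightarrow$(\ref{item_P7_1}) is immediate: for disjoint non-empty open $A,B\sub\L(G)$ the non-empty product $A\times B$ must, by (\ref{item_P7_2}), contain a pair $(g^+,g^-)$, and the non-emptiness of~$\H(G)$ follows from that density together with $\L(G)\neq\es$.

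The main step is (\ref{item_P7_3})$\Rightarrow$(\ref{item_P7_2}). Given $(\eta,\mu)\in\L(G)\times\L(G)$ and open neighbourhoods $A\ni\eta$, $B\ni\mu$ in~$\L(G)$, I need a hyperbolic $f\in G$ with $(f^+,f^-)\in A\times B$. When $\eta\neq\mu$ I shrink $A,B$ to be disjoint using the Hausdorff property of~$\hat{X}$ and invoke bilateral denseness, which I already have from~(\ref{item_P7_3}) via Theorem~\ref{thm_P5}. The main obstacle is the diagonal case $\eta=\mu$, where both fixed points of~$f$ must land in the single open set~$A$. My plan here is to reuse the construction from the proof of Theorem~\ref{thm_P5}: density of~$\H(G)$ provides some hyperbolic $a\in G$ with $a^+\in A$; hypothesis~(\ref{item_P7_3}) provides, by the same argument as in that earlier proof, a hyperbolic $b$ whose fixed points avoid~$\{a^+,a^-\}$; and then $f:=a^{-n}ba^n$ is hyperbolic with fixed points $b^\pm a^n$, both of which converge to~$a^+$ as $n\to\infty$ by contractivity, so $f^+,f^-\in A$ for all sufficiently large~$n$.
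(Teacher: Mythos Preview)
Your proof is correct. For perfectness you argue essentially as the paper does, with a harmless extra case split (the paper simply notes that an isolated point of~$\L(G)$ must lie in the dense subset $\H(G)$, which absorbs your Case~1 into Case~2). The genuine difference is in the equivalences: you close the cycle via the constructive implication (\ref{item_P7_3})$\Rightarrow$(\ref{item_P7_2}), handling the diagonal case $\eta=\mu$ by rebuilding the conjugation trick $a^{-n}ba^n$ from the proof of Theorem~\ref{thm_P5} to force both fixed points into a single neighbourhood. The paper instead deduces (\ref{item_P7_1})$\Rightarrow$(\ref{item_P7_2}) in one line from perfectness: since $\L(G)$ is perfect and Hausdorff, any non-empty open subset has at least two points, so given non-empty open $A,B\sub\L(G)$ one can always pick $a\in A$, $b\in B$ with $a\neq b$, separate them by disjoint open $A'\sub A$, $B'\sub B$, and invoke bilateral denseness directly. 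Your route is more explicit and self-contained but repeats machinery already packaged in Theorem~\ref{thm_P5}; the paper's route is shorter and shows that perfectness is really the only new ingredient needed beyond Theorem~\ref{thm_P5}.
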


\begin{proof}
To show that $\L(G)$ is perfect, we have to show that $\L(G)$ contains no isolated point.
Let us suppose that $\eta\in\L(G)$ is isolated.
As $\H(G)$ is dense in~$\L(G)$ according to Theorem~\ref{thm_W2}\,(\ref{item_W2_2}), we find a hyperbolic element $g\in G$ with $g^+=\eta$.
Let $\mu\in\L(G)$ with $\mu g\neq\mu$.
This limit point exists as $\L(G)$ is infinite.
Since $g$ is hyperbolic and $\hat{X}$ is contractive, the sequence $(\mu g^i)_{i\in\nat}$ converges to~$g^+$ but none of its elements is~$g^+$.
Hence, $g^+$ cannot be isolated in~$\L(G)$.

\medskip

For the additional statement, we note that (\ref{item_P7_1}) is oviously a direct consequence of~(\ref{item_P7_2}).
The fact that $\L(G)$ is perfect implies the inverse direction and the equivalence of (\ref{item_P7_1}) and (\ref{item_P7_3}) is a special case of Theorem~\ref{thm_P5}.
\end{proof}

\section{Hyperbolic spaces}\label{sec_Hyp}

In this section, we consider hyperbolic spaces that are not necessarily proper\footnote{A metric space is \emph{proper} if all closed balls of finite diameter are compact.}
 but \emph{geodesic}, that is for every two points $x,y$ there is an isometric image of $[0,d(x,y)]$ joining $x$ and~$y$.
We shall show that the geodesic hyperbolic spaces with their hyperbolic boundary are contractive $G$-completions and hence, that the theorems of Section~\ref{sec_Def} are true for them.
To obtain an overview what basic properties of geodesic hyperbolic spaces are known, we refer to~\cite{BS-Elements,V-GromovHyp} and for an introduction to proper geodesic hyperbolic spaces, we refer to~\cite{ABCFLMSS,CoornDelPapa,SymbolicDynamics,GhHaSur, gromov,KB-BoundaryHypGroup}.
Since we deal with spaces that are not necessarily proper, we will cite from the first list and mainly from~\cite{BS-Elements}.
Let us briefly recall the main definitions for hyperbolic spaces.

\medskip

Let $X$ be a metric space.
The \emph{Gromov-product} $(x,y)_o$ of $x,y\in X$ with respect to the \emph{base-point} $o\in X$ is defined as follows:
\[(x,y)_o:={\textstyle \frac{1}{2}}d(o,x)+d(o,y)-d(x,y).\]
For $\delta\geq 0$, the space $X$ is \emph{$\delta$-hyperbolic} if for given base-point $o\in X$ we have
\[(x,y)_o\geq \min\{(x,z)_o, (y,z)_o\}-\delta\]
for all $x,y,z\in X$.
A space is \emph{hyperbolic} if it is $\delta$-hyperbolic for some $\delta\geq 0$.

It is easy to show that the definition of being hyperbolic does not depend on~$o$, that is, if the space is $\delta$-hyperbolic with respect to~$o\in X$, then for $o'\in X$ there exists $\delta'\geq 0$ such that $X$ is $\delta'$-hyperbolic with respect to~$o'$.

To define the completion $\hat{X}$ of a geodesic hyperbolic space $X$, we define a further metric on~$X$.
For this, let $\varepsilon>0$ with $\varepsilon'=\exp(\varepsilon\delta)-1<\sqrt{2}-1$.
For $x,y\in X$, let
\begin{center}{
\begin{tabular}{ll}
$\varrho_\varepsilon(x,y)=\exp(-\varepsilon(x,y)_o) $&if $x\neq y,$\\
$\varrho_\varepsilon(x,y)=0$&otherwise.
\end{tabular}
}\end{center}
Then
\[
d_\varepsilon(x,y)=\inf\{\sum_{i=1}^{n-1} \varrho_\varepsilon(x_i,x_{i+1})\mid x_i\in X,\ x_1=x,\ x_n=y\}
\]
for all $x,y\in X$ defines a metric on~$X$ with
\begin{equation}\label{eq_VisualMetric}
(1-2\varepsilon')\varrho_\varepsilon(x,y)\leq d_\varepsilon(x,y)\leq \varrho_\varepsilon(x,y)
\end{equation}
for all $x,y\in X$, see e.g.~\cite[Theorem 2.2.7]{BS-Elements}.
Let $\hat{X}$ be the completion of the metric space $(X,d_\varepsilon)$ and let $\rand X=\hat{X}\sm X$ be the \emph{hyperbolic boundary} of~$X$.
A subset $S$ of~$X$ \emph{separates} two sets $U,V\sub \hat{X}$ \emph{geodesically} if every geodesic between a point of~$U$ and a point of~$V$ intersects non-trivially with~$S$.

Let $A$ and~$B$ be two subsets of a metric space $Y$.
We say that $A$ lies \emph{$\gamma$-close} to~$B$ for some $\gamma\geq 0$ if $d(a,B)\leq\gamma$ for all $a\in A$.
A \emph{triangle} $xyz$ in a geodesic metric space $Y$ is a union of three geodesics~-- called \emph{sides} of the triangle~--, one between each two of the \emph{vertices} $x$, $y$, and $z$ of the triangle.
A triangle is \emph{$\delta$-thin} if any of its sides lies $\delta$-close to the union of its other two sides.
Due to~\cite[Proposition 2.1.3]{BS-Elements}, every triangle in a geodesic $\delta$-hyperbolic space is $4\delta$-thin.

A useful property of the Gromov-product in geodesic hyperbolic spaces is the following:

\begin{lem}\label{lem_GromProd=Dist}
Let $x,y,z\in X$. Then we have for all geodesics $\pi$ between $y$ and~$z$:
\[
d(x,\pi)-8\delta\leq (y,z)_x\leq d(x,\pi).
\]
\end{lem}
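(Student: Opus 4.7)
The upper bound $(y,z)_x \leq d(x,\pi)$ is routine and does not use hyperbolicity. For any $p \in \pi$, the fact that $\pi$ is a geodesic joining $y$ and $z$ gives $d(y,p)+d(p,z)=d(y,z)$, while the triangle inequality applied twice gives $d(x,y)+d(x,z)\leq 2d(x,p)+d(y,p)+d(p,z)$. Combining the two and halving yields $(y,z)_x \leq d(x,p)$, so taking the infimum over $p \in \pi$ gives the claim.

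The lower bound $d(x,\pi)-8\delta \leq (y,z)_x$ is where the hyperbolicity is used, through the $4\delta$-thin triangle statement (Proposition~2.1.3 of~\cite{BS-Elements}) cited just above the lemma. The plan is to produce a specific point on $\pi$ that is close to~$x$ via a standard connectedness argument on the side~$[x,y]$. Fix geodesics $\alpha=[x,y]$ and $\beta=[x,z]$ and consider the triangle $\alpha\cup\beta\cup\pi$. Parametrize $\alpha$ by arclength from~$x$ and set
\[
A=\{t\in[0,d(x,y)]:\alpha(t)\text{ lies $4\delta$-close to }\beta\},\quad
B=\{t\in[0,d(x,y)]:\alpha(t)\text{ lies $4\delta$-close to }\pi\}.
\]
Both sets are closed, $0\in A$, $d(x,y)\in B$, and by $4\delta$-thinness $A\cup B=[0,d(x,y)]$. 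By connectedness $A\cap B\neq\emptyset$, so pick $t_0\in A\cap B$, set $y_0=\alpha(t_0)$, and choose $u\in\beta$, $p\in\pi$ with $d(y_0,u)\leq 4\delta$ and $d(y_0,p)\leq 4\delta$.

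Now estimate $t_0$ from above. Since $u\in\beta=[x,z]$, we have $d(x,u)\geq t_0-4\delta$ and hence $d(u,z)\leq d(x,z)-t_0+4\delta$. Combining with $d(y,y_0)=d(x,y)-t_0$ and the triangle inequality,
\[
d(y,z)\leq d(y,y_0)+d(y_0,u)+d(u,z)\leq d(x,y)+d(x,z)-2t_0+8\delta,
\]
which rearranges to $t_0\leq (y,z)_x+4\delta$. Finally, $d(x,\pi)\leq d(x,p)\leq d(x,y_0)+d(y_0,p)\leq t_0+4\delta\leq (y,z)_x+8\delta$, as required.

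The only nontrivial step is the connectedness argument on~$\alpha$; the two subsequent estimates using $u$ and $p$ are routine triangle-inequality manipulations, and the resulting constant $8\delta$ matches the one stated in the lemma.
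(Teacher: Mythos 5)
Your proof is correct, and for the upper bound it is identical to the paper's (triangle inequality at a point of $\pi$ plus $d(y,p)+d(p,z)=d(y,z)$). For the lower bound the paper uses the same tool ($4\delta$-thin triangles) but places the transition point $w$ on $\pi$ itself, close to both $[x,y]$ and $[x,z]$; summing the two resulting inequalities $d(x,y)\geq d(x,w)+d(w,y)-8\delta$ and $d(x,z)\geq d(x,w)+d(w,z)-8\delta$ then gives $(y,z)_x\geq d(x,w)-8\delta\geq d(x,\pi)-8\delta$ in one step, whereas you locate the transition point $y_0$ on the side $[x,y]$ and need the two separate estimates $t_0\leq (y,z)_x+4\delta$ and $d(x,\pi)\leq t_0+4\delta$. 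Your version has the small merit of making explicit the closed-cover connectedness argument that produces the transition point, which the paper merely asserts; both routes land on the same constant $8\delta$.
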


\begin{proof}
Let $t\in\pi$ with $d(x,t)=d(x,\pi)$.
Then the triangle-inequalities 
\[
d(x,t)+d(t,y)\geq d(x,y)\quad \text{and}\quad d(x,t)+d(t,z)\geq d(x,z)
\]
gives the second inequality.
For the first inequality, we notice that there is a $w\in\pi$ such that for some geodesics $\pi_y$ and $\pi_z$ between $x$ and~$y$, and $x$ and~$z$, respectively, we have $d(w,u)=d(w,\pi_y)\leq 4\delta$ and $d(w,v)=d(w,\pi_z)\leq 4\delta$ for some $u\in\pi_y$ and $v\in\pi_z$.
So we have
\[
d(x,y)=d(x,u)+d(u,y)\geq d(x,w)+d(w,y)-8\delta
\]
and
\[
d(x,z)=d(x,v)+d(v,z)\geq d(x,w)+d(w,z)-8\delta
\]
as triangles are $4\delta$-thin. These two inequalities lead to the first one of our assertion.
\end{proof}

We call a map $\varphi:Y\to Z$ between metric spaces \emph{quasi-isometric} if there are $\gamma\geq 1$ and $c\geq 0$ such that
\[
\frac{1}{\gamma}d_Y(y,y')-c\leq d_Z(\varphi(y),\varphi(y'))\leq\gamma d_Y(y,y')+c
\]
for all $y,y'\in Y$.
A~\emph{quasi-geodesic} is the image of a quasi-isometric map $\varphi:[0,r]\to Z$ with $r\in\real_{\geq 0}$ and an \emph{infinite quasi-geodesic} is the image of a quasi-isometric map $\varphi:\real_{\geq 0}\to Z$.

Equipped with these definitions we are able to prove that the hyperbolic completions of geodesic hyperbolic spaces are contractive G-completions.
The following lemma is similar to \cite[Lemme 2.2]{CoornDelPapa}.

\begin{lem}\label{lem_CDP2.2}
Let $X$ be a geodesic hyperbolic space and $g\in\Aut(X)$ with $d(x,xg^2)\geq d(x,xg)+8\delta+\gamma$ for some $\gamma>0$ and $x\in X$.
Then there are two distinct boundary points $\eta$, $\mu$ of~$X$ with $(xg^n)_{n\in\nat}\to\eta$ and $(xg^{-n})_{n\in\nat}\to\mu$.

Furthermore, the map $\ganz\to\{xg^z\mid z\in\ganz\}$, $z\mapsto xg^z$ is quasi-isometric.
\end{lem}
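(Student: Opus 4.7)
The plan is to write $x_n := xg^n$, $L := d(x, xg)$, and $\ell_n := d(x, x_n)$; since $g$ is an isometry we have $d(x_i, x_j) = \ell_{|i-j|}$ for all $i, j \in \ganz$, and the hypothesis reads $\ell_2 \geq L + 8\delta + \gamma$. The heart of the argument is the linear-growth estimate
\[
\ell_n - \ell_{n-1} \;\geq\; 6\delta + \gamma \qquad \text{for all } n \geq 2,
\]
which I would prove by induction on $n$; the base case $n = 2$ is precisely the hypothesis. For the inductive step I would apply the defining $\delta$-hyperbolicity inequality
\[
(x_{n-1}, x_{n+1})_x \;\geq\; \min\bigl\{(x_{n-1}, x_n)_x,\, (x_n, x_{n+1})_x\bigr\} - \delta
\]
and expand each Gromov product using $d(x_{n-1}, x_{n+1}) = \ell_2$ and $d(x_k, x_{k+1}) = L$, obtaining
\[
\ell_{n-1} + \ell_{n+1} - \ell_2 \;\geq\; \ell_n + \min\{\ell_{n-1}, \ell_{n+1}\} - L - 2\delta.
\]
If $\ell_{n+1} \geq \ell_{n-1}$, this immediately yields $\ell_{n+1} \geq \ell_n + (\ell_2 - L) - 2\delta \geq \ell_n + 6\delta + \gamma$; in the other case one would instead derive $\ell_{n-1} \geq \ell_n + 6\delta + \gamma$, which contradicts the inductive hypothesis applied at step $n$.

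Iterating gives $\ell_n \geq L + (n-1)(6\delta + \gamma)$, and combined with the trivial triangle-inequality bound $\ell_n \leq nL$ this yields at once the additional assertion that $z \mapsto xg^z$ is a quasi-isometric embedding $\ganz \to X$. For the main assertion, the bi-infinite sequence $(x_z)_{z \in \ganz}$ is therefore a discrete quasi-geodesic line in the geodesic hyperbolic space $X$. I would then invoke the stability of quasi-geodesics (Morse lemma) in geodesic $\delta$-hyperbolic spaces, which holds also without the properness assumption (cf.\ \cite{BS-Elements}): this implies that the orbit stays within uniformly bounded Hausdorff distance of every geodesic connecting two of its points, so via Lemma~\ref{lem_GromProd=Dist} the Gromov products $(x_m, x_n)_x$ grow without bound as $\min(m, n) \to \infty$ on each one-sided half. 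By the visual-metric estimate~\eqref{eq_VisualMetric}, both $(x_n)_{n \in \nat}$ and $(x_{-n})_{n \in \nat}$ are then $d_\varepsilon$-Cauchy and converge to points $\eta, \mu \in \hat{X}$; Lemma~\ref{lem_NoLocalBP} together with $\ell_n \to \infty$ forces $\eta, \mu \in \rand X$.

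For distinctness, the same Morse-stability argument applied to the bi-infinite line shows that $x = x_0$ lies within a uniform bound of every geodesic between $x_n$ and $x_{-n}$; via Lemma~\ref{lem_GromProd=Dist} this means $(x_n, x_{-n})_x$ is bounded above by a constant, so the lower half of~\eqref{eq_VisualMetric} yields a positive lower bound on $d_\varepsilon(x_n, x_{-n})$ and $\eta \neq \mu$ follows. The main obstacle I anticipate is precisely this appeal to Morse stability in the non-proper setting: the argument is essentially combinatorial (using only thin triangles and the inductive estimate above), but one must cite a reference treating quasi-geodesics without assuming properness, for which~\cite{BS-Elements} suffices.
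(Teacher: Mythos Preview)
Your argument is correct and follows essentially the same route as the paper: establish by induction (via a hyperbolicity inequality) that $\ell_n$ grows at least linearly, deduce that $z\mapsto xg^z$ is a quasi-isometric embedding, and then invoke quasi-geodesic stability in~\cite{BS-Elements} to obtain convergence to two distinct boundary points. The only cosmetic differences are that the paper uses the four-point (quadruple) condition on $x,xg,xg^2,xg^m$ rather than your three-point Gromov-product inequality on $x_{n-1},x_n,x_{n+1}$, obtaining the slightly weaker growth $\ell_{n+1}\geq\ell_n+\gamma$, and that it cites the specific results Theorem~4.4.1 and Proposition~5.2.10 of~\cite{BS-Elements} in place of your more expanded Morse-lemma discussion; one small caveat is that your appeal to Lemma~\ref{lem_NoLocalBP} is slightly premature (the hyperbolic completion has not yet been shown to be a projective $G$-completion at this point), but the fact you need---that a $d_\varepsilon$-Cauchy sequence with $d(o,x_n)\to\infty$ has its limit in $\rand X$---is immediate from~\eqref{eq_VisualMetric} and the bound $(x_n,y)_o\leq d(o,y)$.
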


\begin{proof}
Let us first show that the inequalities
\begin{equation}\label{item_CDP2.2.1}
m\,\gamma-\gamma\leq d(x,xg^m)\leq m\, d(x,xg)
\end{equation}
hold for all $m\in\nat$.
The second inequality is obvious by triangle-inequality, so we just have to prove the first one.
Let $m\in\nat$.
Using the quadruple conditions for hyperbolic spaces (cp.\ Section 2.4.1 and Proposition 2.1.3 in~\cite{BS-Elements}) for the points $x$, $xg$, $xg^2$, and $xg^m$, we obtain
\[
d(x,xg^2)+d(xg,xg^m)\leq\max\{d(x,xg)+d(xg^2,xg^m),d(x,xg^m)+d(xg,xg^2)\}+8\delta.
\]
Hence, we have
\begin{align}
\label{item_CDP2.2.2}\max\{d(x,xg^{m-2}),d(x,xg^m)\}&\geq d(x,xg^2)+d(x,xg^{m-1})-d(x,xg)-8\delta\\
&\geq d(x,xg^{m-1})+\gamma.\notag
\end{align}

An easy induction using $d(x,xg^2)\geq d(x,xg)+8\delta+\gamma$ and (\ref{item_CDP2.2.2}) shows
\[
d(x,xg^{n+1})\geq d(x,xg^n)+\gamma
\]
for all $n\in\nat$ and hence, we have $d(x,xg^m)\geq (m-1)\gamma$.

Due to~(\ref{item_CDP2.2.1}), the map $\ganz\to X$, $z\mapsto xg^z$ is quasi-isometric, so we conclude with Theorem 4.4.1 and Proposition 5.2.10 of~\cite{BS-Elements} that $\{xg^n\mid n\in\nat\}$ and $\{xg^{-n}\mid n\in\nat\}$ converge to distinct boundary points.
\end{proof}

\begin{lem}\label{lem_ProdHyp}
Let $X$ be a geodesic hyperbolic space and let $x\in X$.
Let $g,h\in\Aut(X)$ such that $d(x,xg^2)\leq d(x,xg)+8\delta$ and $d(x,xh^2)\leq d(x,xh)+8\delta$ and such that neither $g$ nor~$h$ satisfies the conclusions of Lemma~\ref{lem_CDP2.2}.
If there is a ball $B$ with centre $x$ and radius $R$ such that any geodesic between $x$ and $xgh$ intersects non-trivially with~$Bg$ and if we have $d(B',B'g)>8\delta$ and $d(B'g,B'gh)>8\delta$ for the ball $B'$ with centre $x$ and radius $R+16\delta$, then
\[
d(x,x(gh)^2)> d(x,xgh)+8\delta.
\]
\end{lem}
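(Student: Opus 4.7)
The plan is to apply the $\delta$-hyperbolic $4$-point (quadruple) condition, in exactly the form used in the proof of Lemma~\ref{lem_CDP2.2}, combined with the isometric translation by $gh$ and the separation hypotheses. First I would record the isometry identities $d(xgh, x(gh)^2) = d(x,xgh)$ and $d(xgh, xg^2h) = d(x, xg)$ obtained by translating with $(gh)^{-1}$ and $g^{-1}$ respectively. Since every geodesic from $x$ to $xgh$ meets $Bg$, applying $gh$ shows that every geodesic from $xgh$ to $x(gh)^2$ meets the ball centred at $xg^2h$ of radius $R$. From $d(B',B'g) > 8\delta$ and $d(B'g,B'gh) > 8\delta$ one reads off $d(x,xg) > 2R + 40\delta$ and $d(xg,xgh) > 2R + 40\delta$, and combining with the hypothesis one obtains the key estimate
\[
d(x,xgh) \;\geq\; d(x,xg) + d(xg,xgh) - 2R.
\]

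The main step is the quadruple condition applied to $\{x, xg, xgh, x(gh)^2\}$, which, after using $d(xgh, x(gh)^2) = d(x,xgh)$, reads
\[
d(x,xg) + d(x,xgh) \;\leq\; \max\bigl\{\,d(x,xgh) + d(xg, x(gh)^2),\ d(x, x(gh)^2) + d(xg, xgh)\,\bigr\} + 8\delta.
\]
If the second term realises the maximum, rearranging and substituting the key estimate yields
\[
d(x, x(gh)^2) \;\geq\; d(x,xg) + d(x,xgh) - d(xg,xgh) - 8\delta \;\geq\; 2\,d(x,xg) - 2R - 8\delta,
\]
which exceeds $d(x,xgh) + 8\delta$ provided $d(x,xg) - d(xg,xgh) > 2R + 16\delta$. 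In the complementary case (first term maximum), one extracts $d(xg, x(gh)^2) \geq d(x,xg) - 8\delta$; the same argument applied to the quadruple $\{x, xgh, xg^2h, x(gh)^2\}$, using the $gh$-translated hypothesis on the geodesic from $xgh$ to $x(gh)^2$, gives the analogous dichotomy: either the conclusion directly, or $d(x, xg^2h) \geq d(xg,xgh) - 8\delta$. A third invocation of the quadruple condition on $\{x, xg, xg^2h, x(gh)^2\}$ then combines the two derived estimates to produce the sought strict inequality.

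The main obstacle is the case analysis generated by the "max of two'' alternatives in each quadruple condition, together with the book-keeping needed to absorb the additive $8\delta$ slack into the $>8\delta$ margin afforded by the separation hypothesis. The assumptions $d(x,xg^2) \leq d(x,xg) + 8\delta$ and $d(x,xh^2) \leq d(x,xh) + 8\delta$ (that is, the failure of the conclusions of Lemma~\ref{lem_CDP2.2} for $g$ and~$h$) enter precisely in the residual case, where they prevent the path $x \to xg \to xgh \to xg^2h \to x(gh)^2$ from "folding back'' at $xg$ or $xg^2h$ in a way that would make $d(x,x(gh)^2)$ only marginally larger than $d(x,xgh)$. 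The strictness of the conclusion is inherited from the strict $> 8\delta$ in the separation hypotheses.
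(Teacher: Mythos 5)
There is a genuine gap: the chain of four-point conditions you propose does not close. Two concrete problems. First, in your opening quadruple on $\{x,xg,xgh,x(gh)^2\}$, the branch where the second term realises the maximum gives $d(x,x(gh)^2)\geq 2d(x,xg)-2R-8\delta$, which you yourself note beats $d(x,xgh)+8\delta$ only \emph{provided} $d(x,xg)-d(xg,xgh)>2R+16\delta$; nothing in the hypotheses forces this (take $d(x,xh)$ much larger than $d(x,xg)$), and you never return to this branch. Second, and more structurally, the final step cannot work as described: from the residual cases you extract the \emph{lower} bounds $d(xg,x(gh)^2)\geq d(x,xg)-8\delta$ and $d(x,xg^2h)\geq d(xg,xgh)-8\delta$, but in any four-point inequality the only way to force $d(x,x(gh)^2)$ to be large is to pair it on the left-hand side and to bound the competing maximum from \emph{above}; lower bounds on $d(xg,x(gh)^2)$ and $d(x,xg^2h)$ only enlarge that maximum and therefore yield no information. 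Indeed, writing $a=d(x,xg)$, $b=d(x,xh)$, $c=d(x,xgh)$, the quadruple on $\{x,xg,xg^2h,x(gh)^2\}$ reads $d(x,x(gh)^2)+c\leq\max\{a+b,\,d(x,xg^2h)+d(xg,x(gh)^2)\}+8\delta$ in the only pairing that isolates $d(x,x(gh)^2)$, which is an upper bound, not a lower one. Relatedly, the hypotheses $d(x,xg^2)\leq d(x,xg)+8\delta$ and $d(x,xh^2)\leq d(x,xh)+8\delta$ never enter any inequality in your sketch; the closing paragraph only gestures at where they ``would'' be used.

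The paper's proof supplies exactly the geometric input your algebra is missing: it shows that $xgh$ lies $16\delta$-close to every geodesic $[x,x(gh)^2]$, after which $d(x,x(gh)^2)\geq 2d(x,xgh)-2(R+16\delta)>d(x,xgh)+8\delta$ follows at once. This is done by a thin-polygon argument on the six points $x$, $xg$, $xh$, $xgh^2$, $xg^2h$, $x(gh)^2$: translating the separation hypothesis by $h$ shows that $[xh,xgh^2]$ meets $Bgh$, and since $[xh,xgh^2]$ lies $16\delta$-close to the union of the five geodesics $[xgh^2,xg]\cup[xg,x]\cup[x,x(gh)^2]\cup[x(gh)^2,xg^2h]\cup[xg^2h,xh]$, one of these must meet $B'gh$. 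The hypotheses on $g^2$ and $h^2$ are used precisely here, to show that $[xg,xgh^2]$ and $[xh,xg^2h]$ (whose endpoint distances are $d(x,xh^2)$ and $d(x,xg^2)$) stay away from $xgh$, while the separation hypotheses exclude $[xg,x]$ and $[x(gh)^2,xg^2h]$; only $[x,x(gh)^2]$ remains. If you want to salvage a purely four-point-condition proof, you would at minimum need to convert the hypotheses on $g^2$, $h^2$ into \emph{upper} bounds on the relevant cross-distances (equivalently, lower bounds on Gromov products based at $xgh$), which is essentially what the thin-polygon argument accomplishes.
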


\begin{proof}
We consider the following points in~$X$: $x$, $xg$, $xh$, $xgh^2$, $xg^2h$, and $x(gh)^2$.
If we can show that $xgh$ lies $16\delta$-close to any geodesic between $x$ and~$x(gh)^2$, then this geodesic must intersect non-trivially with $B'gh$ and we obtain
\[
d(x,x(gh)^2)\geq 2d(x,xgh)- 2(R+16\delta)> d(x,xgh)+8\delta.
\]

\medskip

Let us consider a geodesic between $xg$ and~$xgh^2$.
If it intersects non-trivially with $B'gh$, then we conclude
\[
d(xg,xgh^2)\geq d(xg,xgh)+d(xgh,xgh^2)-2(R+16\delta)+8\delta>d(xg,xgh)+8\delta
\]
and we apply Lemma~\ref{lem_CDP2.2} to obtain a contradiction to our assumptions.
Hence, no such geodesic intersects non-trivially with~$B'gh$.
Similarly, if we consider any geodesic between $x$ and~$xg^2$, then we obtain that it intersects non-trivially with $B'g$, so the same holds for any geodesic between $xh$ and $xg^2h$ with the ball $B'gh$.

Since the triangles are $4\delta$-thin, we obtain that $[xh,xgh^2]$ lies $16\delta$-close to
\[
[xgh^2,xg]\cup [xg,x]\cup [x,x(gh)^2]\cup [x(gh)^2,xg^2h]\cup [xg^2h,xh]
\]
where the brackets denote any geodesic between the two points.
As $[xh,xgh^2]$ intersects non-trivially with~$Bgh$, one of the other five geodesics intersects non-trivially with $B'gh$.
We have already shown that this is neither $[xgh^2,xg]$ nor $[xg^2h,xh]$.
The geodesics $[xg,x]$ and $[x(gh)^2,xg^2h]$ do not intersect non-trivially with $Bgh$, too, since $Bg$ separates $x$ and $xgh$ geodesically and the same is true for~$Bg^2h$ with $x(gh)^2$ and $xgh$.
So $[x,x(gh)^2]$ intersects non-trivially with $B'gh$ and the assertion follows as described above.
\end{proof}

Now we are able to prove that geodesic hyperbolic spaces are contractive $G$-completions.

\begin{prop}
Let $X$ be a geodesic hyperbolic space and $\hat{X}$ the completion of~$X$ with the hyperbolic boundary.
Then $\hat{X}$ is a contractive $\Aut(X)$-completion of~$X$.
\end{prop}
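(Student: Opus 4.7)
I have to verify six clauses: the topological conditions (1)--(2), projectivity, the dynamical conditions (3)--(4), and contractivity. Conditions (1)--(2) and projectivity follow from standard properties of the visual metric $d_\varepsilon$ via the Gromov-product characterization of convergence at infinity. Contractivity is a uniform Gromov-product estimate that uses the isometric action on $(X,d)$ translated into the visual metric. The substantive part is verifying (3) and (4), which is where Lemmas~\ref{lem_CDP2.2} and~\ref{lem_ProdHyp} enter.

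\textbf{Topological items, extension of automorphisms, and projectivity.} Since $\hat X$ is a metric space it is regular Hausdorff. On $X$ the metrics $d$ and $d_\varepsilon$ induce the same topology, so $X\hookrightarrow\hat X$ is a homeomorphism; $X$ is dense by construction and open by Lemma~\ref{lem_NoLocalBP}. Any $g\in\Aut(X)$ is a $d$-isometry, so $(xg,yg)_o=(x,y)_{og^{-1}}$; combined with the standard inequality $|(x,y)_o-(x,y)_p|\leq d(o,p)$ this gives
\[
e^{-\varepsilon d(o,og^{-1})}\varrho_\varepsilon(x,y)\leq\varrho_\varepsilon(xg,yg)\leq e^{\varepsilon d(o,og^{-1})}\varrho_\varepsilon(x,y),
\]
hence a bi-Lipschitz estimate for $d_\varepsilon$, so $g$ extends to a homeomorphism of $\hat X$. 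For projectivity, if $x_i\to\eta$ and $d(x_i,y_i)\leq M$, a direct computation from the definition of the Gromov product yields $(y_i,x_j)_o\geq(x_i,x_j)_o-M$; since $x_i\to\eta$ forces $(x_i,x_j)_o\to\infty$, the bound $d_\varepsilon\leq\varrho_\varepsilon$ gives $d_\varepsilon(y_i,x_j)\to 0$, hence $y_i\to\eta$.

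\textbf{Contractivity.} Assume $xg_n\to\eta$ and $xg_n^{-1}\to\mu$ with $\eta\neq\mu$, and let $V\ni\mu$ be open. Set $p_n:=og_n^{-1}$. The isometry identity yields
\[
(yg_n,xg_n)_o=(y,x)_{p_n}=d(x,p_n)-(y,p_n)_x.
\]
Since $d(p_n,xg_n^{-1})=d(o,x)$ is constant, $p_n\to\mu$ in $\hat X$ and $d(x,p_n)\to\infty$. Extending the hyperbolicity inequality to boundary points gives
\[
(y,\mu)_x\geq\min\{(y,p_n)_x,(p_n,\mu)_x\}-O(\delta).
\]
For $y\in\hat X\sm V$ the quantity $(y,\mu)_x$ is uniformly bounded above by the $d_\varepsilon$-separation of $y$ from $\mu$, whereas $(p_n,\mu)_x\to\infty$. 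Hence for $n$ large the minimum is $(y,p_n)_x$, which is therefore uniformly bounded in $y\in\hat X\sm V$. Consequently $(y,x)_{p_n}\to\infty$ uniformly, so $d_\varepsilon(yg_n,xg_n)\to 0$ uniformly and $yg_n\in U$ for all $n\geq n_0$ and all $y\in\hat X\sm V$; boundary $y$ are handled by approximating from $X\sm V$ and using the already-proved continuity of $g_n$.

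\textbf{Conditions (3), (4), and the main obstacle.} Given $(g_i)$ with $d(x,xg_i)\to\infty$ whose orbit of $x$ has no accumulation point in $\rand X$, the strategy is to produce $\gamma\in\langle g_i\mid i\in\nat\rangle$ with $d(x,x\gamma^2)>d(x,x\gamma)+8\delta$; Lemma~\ref{lem_CDP2.2} then delivers two distinct boundary points to which $x\gamma^{\pm n}$ converges. If no power $g_i^k$ already satisfies this bound, I iterate Lemma~\ref{lem_ProdHyp}: because the orbit escapes to infinity without accumulating on~$\rand X$, the points $xg_i$ must spread into sufficiently distinct directions from $x$, and one can choose $g=g_i$, $h=g_j$ together with a ball $B$ around $x$ satisfying the geodesic-separation hypothesis of Lemma~\ref{lem_ProdHyp}, so that $\gamma=gh$ has the required growth. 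For~(4), the same construction applied to tails of $(g_i)$ yields the $h_j$; since $xg_i\to\eta$, taking the constituents from sufficiently late indices forces the resulting directions $\eta_j$ to lie close to~$\eta$, giving $\eta_j\to\eta$. The main obstacle is precisely this iterative construction for~(3)--(4): turning the bare divergence of $d(x,xg_i)$ without boundary accumulation into the concrete ball-separation hypothesis of Lemma~\ref{lem_ProdHyp} requires extracting enough orbit geometry (well-separated directions from $x$) to trigger the lemma, whereas contractivity and projectivity are comparatively routine Gromov-product computations.
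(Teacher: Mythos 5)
Your overall architecture matches the paper's: the same decomposition into the clauses of the definition, the same bi-Lipschitz/Gromov-product arguments for extension of automorphisms and projectivity, essentially the paper's own contractivity computation (the identity $(yg_n,xg_n)_o=d(x,og_n\inv)-(y,og_n\inv)_x$ together with a uniform upper bound on $(y,og_n\inv)_x$ for $y\notin V$ coming from $d_\varepsilon(y,\mu)\geq\theta$), and the same reliance on Lemmas~\ref{lem_CDP2.2} and~\ref{lem_ProdHyp} for conditions~(\ref{item_CompletionIII}) and~(\ref{item_CompletionIIStronger}). For~(\ref{item_CompletionIII}) your sketch is thin but points at the right mechanism; the paper makes it precise with a dichotomy over balls $B_i$ of radius $i$ about~$x$: either for every $i$ all but finitely many $xg_j$ lie in a common geodesic shadow of~$B_i$, in which case Lemma~\ref{lem_GromProd=Dist} forces $(xg_k,xg_\ell)\to\infty$ and the orbit accumulates at a boundary point, or some $B_i$ geodesically separates two orbit points with the distance conditions needed to feed $g_k\inv$, $g_\ell$, or $g_k\inv g_\ell$ into Lemma~\ref{lem_CDP2.2} or~\ref{lem_ProdHyp}. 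You should spell this dichotomy out, but it is the same route. (Minor point: invoking Lemma~\ref{lem_NoLocalBP} for openness of $X$ in $\hat X$ is circular, since that lemma presupposes a projective $G$-completion; openness follows directly from $d_\varepsilon(x,\rand X)\geq(1-2\varepsilon')e^{-\varepsilon d(o,x)}>0$.)

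The genuine gap is in condition~(\ref{item_CompletionIIStronger}). You assert that building the hyperbolic elements from ``sufficiently late'' constituents $g_m$ (or $g_mg_{m'}$) forces their directions to converge to~$\eta$. That does not follow. The direction of such an element $f$ is the endpoint of the quasi-geodesic $k\mapsto xf^k$, whose quasi-isometry constants (via the inequalities $(k-1)\gamma\leq d(x,xf^k)\leq k\,d(x,xf)$ of Lemma~\ref{lem_CDP2.2}) involve the multiplicative constant $d(x,xf)\to\infty$; hence the stability constant controlling how far $f^+$ can drift from the shadow of $xf$ degenerates as the indices grow, and closeness of $xg_m$ to $\eta$ gives no lower bound on $(f^+,\eta)$. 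The paper's proof closes this with a conjugation trick that your argument lacks: for each $n$ it first fixes one hyperbolic $f_n\in\langle g_i\mid i\in\nat\rangle$ together with its (now fixed) stability constant $\Delta_n$, then chooses $i_n$ so large that the $\Delta_n$-ball $B$ about $xg_{i_n}$ lies inside $B_{1/n}(\eta)$, and sets $h_n:=g_{i_n}\inv f_ng_{i_n}$. The quasi-axis of $h_n$ then passes through $B$, so $B$ geodesically separates the two ends of the axis and Lemma~\ref{lem_GromProd=Dist} plus the visual-metric estimate~(\ref{eq_VisualMetric}) force one direction $\eta_n$ of $h_n^{\pm 1}$ to satisfy $\eta_n\to\eta$. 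The essential point is the order of quantifiers: $\Delta_n$ must be known before $i_n$ is chosen. Without this (or an equivalent uniform control of the stability constants) your verification of~(\ref{item_CompletionIIStronger}) does not close.
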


\begin{proof}
By its definition, $\hat{X}$ is a completion of~$X$ and from~\cite[Section~2.2.3]{BS-Elements} we deduce that isometries of~$X$ extend to homeomorphisms of~$\hat{X}$.
As $(\hat{X},d_\varepsilon)$ is a metric space, it is regular.
Let $(g_i)_{i\in\nat}$ be a sequence in~$\Aut(X)$ such that $d(x,xg_i)$ is unbounded for some $x\in X$.
We will show~(\ref{item_CompletionIII}).
Let us consider closed balls $B_i$ with centre $x$ and radius~$i$.
Either, for all $i$, all but finitely many $xg_j$ are not geodesically separated by~$B_i$ or there are a ball $B_i$ and $k,\ell\in\nat$ such that $B_i$ separates $xg_k$ and $xg_\ell$ geodesically and $d(B_ig_k,B_i)>8\delta$ and $d(B_ig_\ell,B_i)>8\delta$.
In the first case, we obtain $(xg_k,xg_\ell)\to\infty$ for $k,\ell\to\infty$ because of Lemma~\ref{lem_GromProd=Dist}, so the sequence converges to some boundary point.
In the second case, either one of~$g_k\inv$ and~$g_\ell$ or due to Lemma~\ref{lem_ProdHyp} the automorphism $g_k\inv g_\ell$ has the desired limit points by Lemma~\ref{lem_CDP2.2}.
This shows~(\ref{item_CompletionIII}).

For the proof of~(\ref{item_CompletionIIStronger}) let $(g_i)_{i\in\nat}$ be a sequence in~$G$ such that for some $x\in X$ and $\eta\in\rand X$ we have $xg_i\to\eta$ for $i\to\infty$ and such that $\{xg_i\inv\mid i\in\nat\}$ has no accumulation point in~$\rand X$.
Notice that the convergence of the sequence $(xg_i)_{i\in\nat}$ implies $d(x,xg_i)\to\infty$ for $i\to\infty$.
Analogously as in the proof of~(\ref{item_CompletionIII}), we find $k,\ell\in\nat$ such that one of the automorphisms $g_k$, $g_\ell$, and $g_kg_\ell$ satisfies the assumption of Lemma~\ref{lem_CDP2.2} and hence fulfills the conclusions of that lemma.
Furthermore, we find for each $k,\ell\in\nat$ further integers $k',\ell'\in\nat$ both larger than $k$ and~$\ell$ such that among $g_{k'}$, $g_{\ell'}$ and $g_{k'}g_{\ell'}$ we find another automorphism that satisfies the conclusions of Lemma~\ref{lem_CDP2.2}.
So we find an infinite sequence $(f_i)_{i\in\nat}$ of such automorphisms: $f_i$~is either some~$g_m$ or some~$g_mg_n$ and for $i\to\infty$ also the indices $m$ and~$n$ grow.
Using this sequence, we shall construct another sequence $(h_i)_{i\in\nat}$ of automorphisms such that each of the two sets $\{xh_i^n\mid n\in\nat\}$ and $\{xh_i^{-n}\mid n\in\nat\}$ has a limit point $\eta_i$ and $\mu_i$, respectively, such that these two limit points are distinct and such that $\eta_i\to\eta$ for $i\to\infty$.
The first of these two properties is also true for each $f_i$ and we will use this for the proof of the property for the $h_i$.

Let us consider the open balls $B_{1/n}(\eta)$.
For $f_n$, there is a constant $\Delta_n$ due to \cite[Theorem 1.3.2]{BS-Elements} such that any geodesic between $xf_n^{-m}$ and $xf_n^m$ lies $\Delta_n$-close to $\{xf_n^j\mid |j|\leq m\}$.
As $xg_i\to\eta$ for $i\to\infty$, we find $i_n$ such that the $\Delta_n$-ball $B$ with centre $xg_{i_n}$ lies completely in $B_{1/n}(\eta)$.
Let $h_n=g_{i_n}\inv f_n g_{i_n}$.
Since $h_n$ is conjugated to~$f_n$, the conclusions of Lemma~\ref{lem_CDP2.2} also hold for~$h_n$.
Let us consider the two sets $Q_1:=\{xf^j_n\mid j\in\nat\}$ and $Q_2:=\{xf^{-j}_n\mid j\in\nat\}$ and, for $i=1,2$, quasi-geodesics $R_i$ that contain all elements of~$Q_i$ and a geodesic between any $xf^j_n$ and~$xf^{j+1}_n$.
The ball $B$ separates any $q_1\in R_1$ from any $q_2\in R_2$ geodesically by its choice.
Thus, one of the two quasi-geodesics, say $R_i$, has distance at least $d(x,B)/2$ to~$x$.
As $R_i$ is quasi-geodesic, it has a limit point $\eta_n\in\rand X$.
Using $4\delta$-thin triangles $z_\ell(xg_{i_n})q_\ell$ for sequences $(z_\ell)_{\ell\in\nat}$ in $B_{1/n}(\eta)$ and $(q_\ell)_{\ell\in\nat}$ in~$R_i$ converging to~$\eta$ and $\eta_n$, respectively, we obtain by Lemma~\ref{lem_GromProd=Dist}
\[
(z_\ell,q_\ell)\geq d(x,\pi_\ell)-8\delta=d(x,B)/2-12\delta,
\]
where $\pi_\ell$ is a geodesic between $z_\ell$ and~$q_\ell$.
So due to (\ref{eq_VisualMetric}), we know that the sequence $(\eta_k)_{k\in\nat}$ converges to~$\eta$.
Notice that we might have to change some $h_i$ in the sequence $(h_i)_{i\in\nat}$ to $h_i\inv$ to obtain precisely the statement of~(\ref{item_CompletionIIStronger}).

For the projectivity property, let $(x_i)_{i\in\nat}$ be a sequence in~$X$ that converges to some $\eta\in\rand X$ and let $(y_i)_{i\in\nat}$ be another sequence in~$X$ such that there is an $M\geq 0$ with $d(x_i,y_i)\leq M$ for all $i\in\nat$.
As $(x_i)_{i\in\nat}$ converges to a boundary point, we have $d(o,x_i)\to\infty$ and thus also $d(o,y_i)\to\infty$.
This implies $(x_i,y_i)\to \infty$, so $d_\varepsilon(x_i,y_i)\to 0$.
Hence, $\hat{X}$ is a projective $G$-completion.

To show contractivity, let $(g_i)_{i\in\nat}$ be a sequence in~$\Aut(X)$ such that for the base point $x\in X$ of the Gromov-product, the sequence $(xg_i)_{i\in\nat}$ converges to~$\eta\in\rand X$ and $(xg_i\inv)_{i\in\nat}$ converges to~$\mu\in\rand X$.
Let $U$ and~$V$ be open neighbourhoods of~$\eta$ and~$\mu$, respectively.
Then there are $\theta>0$ and $n_0\in\nat$ such that
\[
\{xg^m\mid m\geq n_0\}\cup\{\eta\}\sub B_{\theta/3}(xg_n)\quad \text{and}\quad B_\theta(xg_n)\sub U
\]
as well as
\[
\{xg^{-m}\mid m\geq n_0\}\cup\{\mu\}\sub B_{\theta/3}(xg_{-n})\quad \text{and}\quad B_\theta(xg_{-n})\sub V
\]
for all $n\geq n_0$.
Let $y\in X\sm B_{2\theta/3}(xg_{n_0}\inv)$.
Then we have $d_\varepsilon(y,\mu)\geq\theta/3$ and $\exp(-\varepsilon(xg_n\inv,y))\geq d_\varepsilon(xg_n\inv, y)\geq\theta/3$.
We conclude
\begin{align*}
(xg_n,yg_n)&={\textstyle \frac{1}{2}}(d(x,xg_n)+d(x,yg_n)-d(xg_n,yg_n))\\
&={\textstyle \frac{1}{2}} (d(x,xg_n)+d(x,yg_n)-d(x,y))\\
&=d(x,xg_n)-(xg_n\inv,y).
\end{align*}
As $d(x,xg_n)\to\infty$ for $n\to\infty$, we find $n_1\in\nat$ such that we have
\begin{align*}
d_\varepsilon(xg_n,yg_n)&\leq\varrho_\varepsilon(xg_n,yg_n)\\
&=\exp(-\varepsilon d(x,xg_n)+\varepsilon(xg_n\inv,y))\\
&\leq\exp(-\varepsilon d(x,xg_n)-\log(\theta/3))\\
&<\theta/3.
\end{align*}
for all $n\geq n_1$.
So $yg_n$ lies in $B_{\theta/3} (xg_n)\sub U$.
Let $\nu\in\rand{X}\sm V$.
Then we can find a sequence $(y_i)_{i\in\nat}$ in~$X\sm B_{2\theta/3}(xg_{-n})$ that converges to~$\nu$.
Since $y_ig_n\in B_{\theta/3} (xg_n)$, we conclude that $\nu g_n$ lies in $B_{\theta} (xg_n)$.
This shows contractivity and hence, we have shown that $\hat{X}$ is a contractive $\Aut(X)$-completion.
\end{proof}

We directly obtain:

\begin{cor}\label{cor_HypBound}
Let $X$ be a geodesic hyperbolic space and $\hat{X}$ the completion of~$X$ with its hyperbolic boundary.
Then the theorems of Section~\ref{sec_results} hold for~$\hat{X}$.\qed
\end{cor}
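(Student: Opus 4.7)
The plan is to observe that Corollary~\ref{cor_HypBound} is essentially a repackaging of the preceding proposition together with a trivial stability observation. First, the proposition immediately above shows that $\hat{X}$ equipped with its hyperbolic boundary is a contractive $\Aut(X)$-completion of~$X$. Hence every result proved in Section~\ref{sec_results} applies when $G=\Aut(X)$.

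To obtain the corollary in the form in which Section~\ref{sec_results} is stated (namely, for an arbitrary group $G$ of automorphisms), I would verify in one paragraph that each of the defining conditions~(\ref{item_Completion})--(\ref{item_CompletionIIStronger}) as well as projectivity and contractivity pass from $\Aut(X)$ to any subgroup $G\leq\Aut(X)$. This is immediate: conditions (\ref{item_Completion}) and (\ref{item_CompletionI}) do not refer to the acting group beyond requiring group elements to extend to homeomorphisms; conditions (\ref{item_CompletionII}) and (\ref{item_CompletionIIStronger}) concern sequences $\sequ{g}$ in~$G$, and since every sequence in~$G$ is also a sequence in~$\Aut(X)$, the conclusions hold with $\gamma$ respectively $(h_j)_{j\in\nat}$ taken from $\langle g_i\mid i\in\nat\rangle$, which is automatically a subgroup of~$G$; projectivity and contractivity do not mention the group at all and are properties of~$\hat{X}$.

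Consequently $\hat{X}$ is a contractive $G$-completion of~$X$ for every $G\leq\Aut(X)$, and Theorems~\ref{thm_W1}, \ref{thm_W2}, \ref{thm_W3}, \ref{thm_P5}, and~\ref{thm_P7} all apply. There is no real obstacle here; the content of the corollary has already been established by the preceding proposition, and what remains is only the formal remark that the hypotheses of Section~\ref{sec_results} are inherited by subgroups. This explains why the author writes \qed directly after the statement.
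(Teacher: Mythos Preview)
Your proposal is correct and matches the paper's approach: the corollary is immediate from the preceding proposition, which is why the author simply writes \qed, and your observation that the defining conditions are inherited by subgroups $G\leq\Aut(X)$ is the right (and only) thing to check. One small inaccuracy: contractivity \emph{does} mention the group, since it quantifies over sequences $\sequ{g}$ in~$G$; but as this is a universal condition it passes to subgroups for exactly the same reason you gave for~(\ref{item_CompletionII}) and~(\ref{item_CompletionIIStronger}).
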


\section{Graphs with their ends}\label{sec_App:Graphs}

Contractive $G$-completions are natural generalizations of the contractive $G$-compactifications defined by Woess \cite{W-FixedSets}.
Besides proper geodesic hyperbolic spaces, examples for those contractive $G$-compactifications are locally finite connected graphs $X$ with \emph{vertex ends} as boundary (see~\cite{W-FixedSets}) that are the equivalence classes of \emph{rays} (i.e.\ one-way infinite paths) where two rays are equivalent if and only if they lie eventually in the same component of $X\sm S$ for any finite vertex set~$S$.
A base for the topology on a graph with its vertex ends is given by sets that are open in the distance metric of the graph and by sets $C$ of vertices that have a finite neighbourhood (vertices in $V(G)\sm C$ that are adjacent to some vertex of~$C$) and such that some ray lies in~$C$.
In this latter situation, the set $C$ is a neighbourhood of all vertex ends that have a ray which lies in~$C$.

For our theorems, we dropped the hypothesis on~$X$ being a proper metric space, that is, we do not require the graphs to have finite degrees.
Thus, the canonical guess would be to ask if arbitrary connected graphs $X$ with their vertex ends are examples of $G$-completions.
Unfortunately, this is not the case: the first obstacle is that such a space is not projective and the second is that the uniform convergence property of the contractivity does not hold for the space.
We give an example for these two obstacles:

\begin{exam}\label{exam_vertexEnds}
Let $X$ be a graph such that every vertex is a cut vertex and lies in $\lambda$ blocks each of which is a copy of the complete graph on $\kappa$ vertices, where $\kappa$ and $\lambda$ are infinite cardinals.
These graphs have a large symmetry group: its automorphisms do not only act transitively on the graph.
Indeed, the graphs are \emph{distance-transitive} graphs\footnote{A graph is called \emph{distance-transitive} if, for each $k\in \nat$, its automorphisms act transitively on those pairs of vertices that have distance $k$ to each other.}, cp.~\cite{HP-Transitivity,DistanceTransitive}.

Considering the completion $\hat{X}$ of $X$ with its vertex ends, any two rays in distinct blocks have bounded distance to each other but they lie in distinct vertex ends.
Thus, $\hat{X}$ is not projective.

To see that also the second part of the definition of contractivity -- the uniform converging property -- does not hold, let $Y$ be a block in~$X$ and $C$ be a component of $X-y$ for a vertex $y\in Y$ with $C\cap Y=\es$.
Let $(y_i)_{i\in\nat}$ be a sequence in~$Y$ such that its elements are pairwise distinct and also all distinct from $y$ and such that $Y\sm\{y_i\mid i\in\nat\}$ is infinite.
Let $(C_i)_{i\in\nat}$ be a sequence of components of $X\sm\{y_i\}$ with $C_i\cap Y=\es$.
Then there is an automorphism $g$ of~$X$ with $C_i g=C_{i+1}$ that fixes $C$ pointwise.
Thus, we have $xg^i\to\eta$ for $i\to\infty$ and for every $x\in C_1$, where $\eta$ is the end that contains all rays in~$Y$, and also $xg^{-i}\to\eta$ for $i\to\infty$.
There is a neighbourhood $U$ of~$\eta$ that intersects with $C$ trivially.
Hence, $xg^n$ has to converge to~$\eta$ for every $x\in C$ if the uniform convergence property holds, but $g$ fixes~$C$ pointwise, so we have $xg^n=x$.
This shows that also uniform convergence fails for~$X$ and it finishes Example~\ref{exam_vertexEnds}.
\end{exam}

But nevertheless, at least Theorem~\ref{thm_W1} and Theorem~\ref{thm_W3} are true for connected graphs with their vertex ends as boundary, see Halin~\cite{H-AutoAndEndo} and Jung~\cite{J-FiniteFixedSets}.
Although the vertex ends fail to make $\hat{X}$ a $G$-completion in general, there is on one side a natural subclass of the ends and on the other side another notion of ends, the metric ends as defined by Kr\"on in~\cite{Kroen-EndCompact} (see also Kr\"on and M\"oller \cite{KroenMoeller-MetricEnds,KM-QuasiIsometries}), which our situation fits to.

\medskip

We call a ray a \emph{local ray} if there is a vertex set of finite diameter that contains infinitely many vertices of the ray.
As we have seen in Example~\ref{exam_vertexEnds}, two distinct ends each of which contains a local ray is an obstruction for any completion of a graph to be projective and any end that contains a local ray might be an obstacle for the uniform convergence property in the definition of contractivity.
This motivates us to consider only those ends for the contractive $G$-completion that do not contain any local ray.
And indeed, we obtain the following result:

\begin{thm}\label{thm_VertexEnds}
Let $X$ be a connected graph and $\hat{X}$ the completion of~$X$ with all those vertex ends of~$X$ that do not contain any local ray.
Then $\hat{X}$ is a contractive $\Aut(X)$-completion and the theorems of Section~\ref{sec_results} hold for~$\hat{X}$.
\end{thm}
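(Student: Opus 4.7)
The plan is to verify that $\hat{X}$ satisfies the four defining clauses of a $G$-completion together with projectivity and contractivity, after which the last sentence of the theorem follows immediately from the results of Section~\ref{sec_results}. Write $C_\eta(S)$ for the unique component of $X\sm S$ that contains a tail of some (equivalently, every) ray in the vertex end $\eta$, whenever $S$ is a finite vertex set such that $\eta$ has a ray lying in a component of $X\sm S$ with finite neighbourhood. Clauses~(\ref{item_Completion}) and~(\ref{item_CompletionI}) are essentially formal: the end-topology makes $X$ open and dense in~$\hat{X}$, and every $g\in\Aut(X)$ permutes the rays, preserves the equivalence relation defining vertex ends, and preserves the property of being a local ray (since it is an isometry), so it preserves the subclass of ends forming~$\rand X$ and extends continuously.

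The central technical lemma is an \emph{escape lemma}: if $\sequ{x}$ converges to a vertex end $\eta\in\rand X$, then $d(x_i,S)\to\infty$ for every finite vertex set~$S$. For a contradiction, assume $d(x_i,S)\le K$ along a subsequence. Fix a ray $r=v_0v_1v_2\ldots$ in $\eta$. Because $\eta$ contains no local ray, $r\cap N_{K+1}(S)$ is finite, so a tail $r'$ of $r$ lies outside $N_{K+1}(S)$. One then produces a finite vertex set $T\supseteq S$ with finite neighbourhood separating $r'$ from $N_K(S)$ by enlarging $S$ finitely many times to swallow the (finitely many) vertices of the ray within $N_{K+1}(S)$; the component $C_\eta(T)$ is a neighbourhood of~$\eta$ disjoint from $N_K(S)$, contradicting $x_i\in N_K(S)$ along a subsequence. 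Projectivity follows at once: if $d(x_i,y_i)\le M$ and $x_i\to\eta$, then for a neighbourhood $C_\eta(T)$ of~$\eta$ we eventually have $d(x_i,T)>M$, so any path of length $\le M$ from $x_i$ stays in $C_\eta(T)$, placing $y_i\in C_\eta(T)$.

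For contractivity, suppose $xg_n\to\eta$ and $xg_n\inv\to\mu$ with $\eta\ne\mu$, and let $U$ and $V$ be open neighbourhoods of~$\eta$ and~$\mu$. By Lemma~\ref{lem_UVExistNicely} applied together with the escape lemma, one may shrink $U$ and $V$ to basic neighbourhoods $C_\eta(T_1)$ and $C_\mu(T_2)$ with $d(T_1,T_2)>0$ and with $T_1,T_2,x$ disjoint from both. Pick $n_0$ so large that for $n\ge n_0$ we have $xg_n\in C_\eta(T_1)$, $xg_n\inv\in C_\mu(T_2)$, and so that $d(x,xg_n)$ exceeds $\diam(T_1\cup T_2)+$(diameter of the chosen basic pieces). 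For $y\in\hat{X}\sm V$ and $n$ large, any geodesic in $X$ from $y$ to $xg_n\inv$ must pass through $T_2$, so after applying $g_n$ any path from $yg_n$ to $x$ passes through $T_2g_n\sub C_\mu(T_2)g_n\sub U$, which forces $yg_n\in U$; uniformity in $y$ comes from the fact that $T_2$ is finite and the bound depends only on $d(x,xg_n)$. Clauses~(\ref{item_Completion}), (\ref{item_CompletionI}) and projectivity together with this uniform-convergence argument yield contractivity.

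The main obstacle will be clauses~(\ref{item_CompletionII}) and~(\ref{item_CompletionIIStronger}). For~(\ref{item_CompletionII}), suppose $d(x,xg_i)\to\infty$ and $\{xg_i\}$ has no accumulation point in~$\rand X$. Then by the escape lemma's converse (if the orbit accumulates only in~$X$ it must return to some bounded region infinitely often, while simultaneously leaving every finite set) one argues combinatorially, after passing to a subsequence, that there is a finite vertex set $S$ and indices $k,\ell$ with $xg_k,xg_\ell$ in different components of $X\sm S$ both with finite neighbourhood and both far from~$S$; then an appropriate product or power of the $g_i$ plays the role of the element $\gamma$, pushing one such component strictly into another and generating a bi-infinite orbit that converges to two distinct non-local ends (the existence of such ends is guaranteed because the orbit defines quasi-geodesic translates of a ray). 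Clause~(\ref{item_CompletionIIStronger}) is obtained by the same kind of ping-pong construction iterated inside shrinking neighbourhoods $C_\eta(T_n)$ of~$\eta$: conjugate the $\gamma$ produced in~(\ref{item_CompletionII}) by elements $g_{i_n}$ with $xg_{i_n}$ deep inside $C_\eta(T_n)$ to manufacture hyperbolic-like automorphisms $h_n$ whose positive direction lies in $C_\eta(T_n)$ and hence converges to~$\eta$. The hard part will be handling the lack of local compactness, where standard König-type subsequence extraction fails; the no-local-ray restriction is precisely what lets one replace compactness by the escape lemma at the relevant step.
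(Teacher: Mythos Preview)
Your overall plan matches the paper's intended approach, which is to mirror the proof of Theorem~\ref{thm_MetricEnds} with finite vertex sets in place of sets of finite diameter (and to cite Spr\"ussel for regularity). But two of your steps are genuine gaps rather than routine details.

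The more serious one is in clauses~(\ref{item_CompletionII}) and~(\ref{item_CompletionIIStronger}). You assert that once a finite set $S$ and indices $k,\ell$ are found with the images $xg_k,xg_\ell$ in different components, ``an appropriate product or power of the $g_i$ plays the role of~$\gamma$, pushing one such component strictly into another and generating a bi-infinite orbit that converges to two distinct non-local ends''. But passing from a nesting condition $(S\cup C)g\subseteq C$ to an actual translation along a double ray whose two ends lie in~$\rand X$ is precisely the content of Theorem~\ref{thm_KM2_12} in the metric-end proof, and of Halin's classical theorem for vertex ends; it is not automatic, and your appeal to ``quasi-geodesic translates of a ray'' imports hyperbolic-space intuition that does not apply to arbitrary graphs. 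One must in addition check that the double ray produced is not local, so that its subrays really define points of the restricted boundary~$\rand X$ rather than of the discarded ends. Without this translation lemma neither clause follows from your sketch.

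Secondly, your escape-lemma argument is incomplete. ``Enlarging $S$ finitely many times to swallow the vertices of the ray within $N_{K+1}(S)$'' does nothing to sever the other, possibly infinitely many, paths from the tail of~$r$ back into $N_K(S)$; and in a non-locally-finite graph you cannot simply take $T=N_K(S)$, since that set may be infinite. The argument that actually works uses the no-local-ray hypothesis globally: if no finite set separates $\eta$ from $N_K(S)$, one can build a ray of~$\eta$ meeting $N_K(S)$ infinitely often, contradicting the hypothesis. (A smaller slip: in your contractivity paragraph you take a geodesic in~$X$ from $y$ to $xg_n^{-1}$, but $y$ may be a boundary point; the paper's argument instead tracks which component of the complement of the finite separators the images $S_Ug_n^{-1}$ and $S_Vg_n$ land in, which handles boundary points and the case $\eta=\mu$ uniformly.)
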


The proof of Theorem~\ref{thm_VertexEnds} is similar to the one of Theorem~\ref{thm_MetricEnds} but uses finite vertex sets instead of vertex sets of finite diameter.
Notice that Spr\"ussel~\cite[Theorem 2.2]{S-EndsNormal} showed that graphs with their ends form a normal topological space.
We omit the proof of Theorem~\ref{thm_VertexEnds} and prove the results for connected graphs with their metric ends instead.

\medskip

A ray in a graph $X$ is a \emph{metric ray} if it eventually lies outside every ball of finite diameter.
So a ray is a metric ray if and only if it is not a local ray.
Two metric rays are \emph{equivalent} if they eventually lie in the same component of $X\sm S$ for any vertex set $S$ of finite diameter.
This is an equivalence relation and its equivalence classes are the \emph{metric ends} of~$X$.
A \emph{metric double ray} is a \emph{double ray} (i.e.\ a two-way infinite path) such that no ball of finite diameter contains infinitely many of its vertices.
So any subray of a metric double ray is a metric ray.
Let us define a base for the topology on a graph with its metric ends:
it consists of all those sets that are open in the distance metric of the graph and of all those sets $C$ of vertices that have a neighbourhood of finite diameter and such that some metric ray lies in~$C$~-- in this situation the set $C$ is a neighbourhood of all metric ends that have a metric ray which lies in~$C$.

To prove that a connected graph $X$ with its metric ends is an $\Aut(X)$-completion, we need a result due to Kr\"on and M\"oller~\cite{KroenMoeller-MetricEnds}, which is (for a connected graph) a stronger version of Lemma~\ref{lem_CertainHypExists}.

\begin{thm}\label{thm_KM2_12}\cite[Theorem~2.12]{KroenMoeller-MetricEnds}
Let $X$ be a connected graph and $g\in\Aut(X)$.
If there is a non-empty vertex set $S$ of finite diameter, a component $C$ of~$X\sm S$ and an $n\in\nat$ with $(S\cup C)g^n\sub C$, then there is a metric double ray $L$ and an $m\in\nat$ such that $g^m$ acts as a non-trivial translation on~$L$.\qed
\end{thm}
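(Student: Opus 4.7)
The plan is first to replace $g$ by $g^n$ to reduce to the case $n=1$, that is, $(S\cup C)g\sub C$; any non-trivial translation on a double ray by a power of $g^n$ is a non-trivial translation by the corresponding power of~$g$.

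From $(S\cup C)g\sub C$ and $S\cap C=\es$ I would first establish three structural facts: the translates $Sg^k$, $k\in\ganz$, are pairwise disjoint; the sets $Cg^k$ form a nested chain $Cg^{k+1}\sub Cg^k$; and each $Sg^k$ separates $Cg^k$ from $X\sm(Sg^k\cup Cg^k)$, being the $g^k$-translate of the analogous statement for $S$ and~$C$. Fixing $x\in S$, for $1\leq\ell\leq m-1$ the vertex $x$ lies in $X\sm(Sg^\ell\cup Cg^\ell)$ while $xg^m\in Sg^m\sub Cg^{m-1}\sub Cg^\ell$, so every path from $x$ to~$xg^m$ must visit each of the pairwise disjoint non-empty sets $Sg^1,\ldots,Sg^{m-1}$, forcing $d(x,xg^m)\geq m$. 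Since $g$ is an isometry this yields $d(xg^j,xg^k)\geq|k-j|$ for all $j,k\in\ganz$, while the triangle inequality gives the upper bound $d(xg^j,xg^k)\leq|k-j|\cdot d(x,xg)$. Hence $k\mapsto xg^k$ is a quasi-isometric embedding of~$\ganz$ into~$X$.

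The core of the proof is to upgrade this quasi-isometric orbit to an honest metric double ray~$L$ carrying a non-trivial translation by some power~$g^m$. I would pick a geodesic $\pi_0$ from~$x$ to~$xg$, set $\pi_k:=\pi_0 g^k$, and form the bi-infinite walk $W:=\bigcup_{k\in\ganz}\pi_k$, which is $g$-equivariant by construction. The lower bound forces every vertex of~$\pi_k$ to lie at distance at least $|k|-d(x,xg)$ from~$x$, so $W$ meets every ball of finite diameter in only finitely many vertices; moreover, each vertex of~$X$ lies on only finitely many of the~$\pi_k$, so $W$ is itself a locally finite subgraph of~$X$. The quotient $W/\langle g\rangle$ is then a finite connected graph. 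Because $W$ is connected with unbounded vertex sequence in both directions, $W/\langle g\rangle$ must contain a closed walk of non-zero $\ganz$-winding~$w$; lifting this closed walk produces a bi-infinite $\langle g^w\rangle$-periodic walk in~$W$, and cleaning up one period inside it yields the required simple bi-infinite path~$L$.

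The main obstacle lies precisely in this equivariant extraction: since $X$ is not assumed locally finite, no $X$-wide K\"onig-type compactness is available, and the vertex~$x$ itself can have infinite degree. The workable substitute is that $W$ is locally finite as a subgraph, so the quotient $W/\langle g\rangle$ is finite and the extraction becomes a finite combinatorial task. That the resulting simple bi-infinite path is a metric double ray follows from $W$ having finite intersection with each ball in~$X$, and non-triviality of the translation is guaranteed by unboundedness of the orbit $\{xg^{kw}\mid k\in\ganz\}\sub L$.
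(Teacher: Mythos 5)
The paper itself gives no proof of this statement: it is quoted from Kr\"on and M\"oller \cite[Theorem~2.12]{KroenMoeller-MetricEnds} and used as a black box, so I can only judge your argument on its own terms. Your preparatory steps are all correct: replacing $g$ by $g^n$; the pairwise disjointness of the translates $Sg^k$; the nesting $Cg^{k+1}\sub Cg^k$ together with the separation property; the resulting bound $d(x,xg^m)\geq m$ and the quasi-isometric embedding $k\mapsto xg^k$; and the observation that $W=\bigcup_k\pi_0g^k$ is a connected, $g$-invariant subgraph meeting every set of finite diameter in only finitely many vertices, whence $\langle g\rangle$ acts freely on $W$ (a vertex lying on infinitely many $\pi_k$ would contradict the distance bound) and $W/\langle g\rangle$ is a finite connected (multi)graph. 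This is essentially the classical translation-lemma strategy of Halin, and it is a sound route to the theorem.

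The gap is in your final clause. Lifting a closed walk of winding $w\neq 0$ and ``cleaning up one period'' upstairs yields a simple path $P$ from some $v$ to $vg^w$, but that is not enough: $\bigcup_k Pg^{kw}$ is a double ray only if distinct translates of $P$ meet in nothing beyond the shared endpoints, and a simple $v$--$vg^w$ path need not have this property ($P$ may swing back near $v$ before reaching $vg^w$, so that $Pg^{w}$ re-enters $P$; iterating the cleanup on longer periods does not obviously terminate). Arranging the translates to be disjoint is precisely the combinatorial heart of the theorem and cannot be dismissed as tidying up. The repair lies inside your own framework, provided the cleanup is performed downstairs rather than upstairs: the winding number is a non-zero homomorphism on the cycle space of the finite graph $W/\langle g\rangle$ (the projection of $\pi_0$ already has winding $1$), and the cycle space of a finite graph is generated by simple cycles, so some simple cycle $\bar{c}$ has winding $w\neq 0$. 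Its lift $p_0\dots p_L$ has vertices projecting to pairwise distinct vertices of $\bar{c}$ (apart from $p_0$ and $p_L=p_0g^w$), and freeness of the action then forces all the vertices $p_ig^{kw}$ to be pairwise distinct, so $\bigcup_k Pg^{kw}$ really is a double ray; it is a metric double ray because it lies in $W$, and $g^{nw}$ translates it by $L\geq 1$. With that modification your argument is complete.
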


\begin{thm}\label{thm_MetricEnds}
Let $X$ be a connected graph and $\hat{X}$ be $X$ with its metric ends.
Then $\hat{X}$ is a contractive $\Aut(X)$-completion of~$X$ and the theorems of Section~\ref{sec_results} hold for~$\hat{X}$.
\end{thm}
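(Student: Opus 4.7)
The plan is to verify each clause of the definition of a contractive $\Aut(X)$-completion, using Theorem~\ref{thm_KM2_12} as the key dynamical input.

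First I would establish the topological axioms (\ref{item_Completion}) and~(\ref{item_CompletionI}). Two distinct metric ends $\eta\neq\mu$ admit representative metric rays that lie eventually in different components of $X\sm S$ for some vertex set $S$ of finite diameter, so the corresponding basic opens separate them; a vertex and a metric end separate by a ball and a basic open, and two vertices separate trivially. Regularity follows by enlarging $S$ to a concentric vertex set $S'$ of finite diameter so that the component of $X\sm S'$ containing a tail of a representative metric ray of $\eta$ has closure inside the original neighborhood. Openness and density of $X$ are immediate from the form of the basic opens. Axiom~(\ref{item_CompletionI}) holds because automorphisms preserve distance, hence preserve vertex sets of finite diameter, components of their complements, and the property of being a metric ray. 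Projectivity is also short: if $x_i\to\eta$ and $d(x_i,y_i)\leq M$, then for any $S$ of finite diameter the $M$-neighborhood of $S$ again has finite diameter, and for $i$ large both $x_i$ and $y_i$ lie in the same component of $X$ minus this enlargement, so $y_i\to\eta$.

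Second I would verify~(\ref{item_CompletionII}). Let $(g_i)_{i\in\nat}$ satisfy $d(x,xg_i)\to\infty$. Either, for every vertex set $S$ of finite diameter, some component $C_S$ of $X\sm S$ contains $xg_i$ for infinitely many $i$: then a diagonal argument over a nested exhaustion of $X$ by vertex sets of finite diameter produces a nested sequence of components whose intersection-via-metric-rays defines a metric end $\eta$, and the $C_S$'s automatically contain metric rays because the corresponding subsequence of $(xg_i)$ escapes every ball of finite diameter. Otherwise, one finds $k,\ell$ and a vertex set $S$ of finite diameter such that $xg_k$ and $xg_\ell$ lie in distinct components of $X\sm S$ both far from $S$, with $S,Sg_k^{-1}g_\ell$ well separated. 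Setting $\gamma:=g_k^{-1}g_\ell$ and possibly replacing $\gamma$ by $\gamma^{-1}$, one achieves $(S\cup C)\gamma^n\sub C$ for a suitable component $C$ and integer $n\geq 1$. Theorem~\ref{thm_KM2_12} then provides a metric double ray $L$ on which some power $\gamma^m$ acts as a non-trivial translation; the two ends of $L$ are the required distinct limits of $(x\gamma^{mj})_{j\in\nat}$ and $(x\gamma^{-mj})_{j\in\nat}$ in~$\rand X$. For~(\ref{item_CompletionIIStronger}), I refine this construction: if $xg_i\to\eta$ while $(xg_i^{-1})$ has no accumulation in~$\rand X$, then by applying the previous dichotomy to pairs $(g_k,g_\ell)$ with both indices large I manufacture for each $j$ an element $h_j\in\langle g_i\mid i\in\nat\rangle$ whose translation axis produced by Theorem~\ref{thm_KM2_12} has forward direction $\eta_j$; by choosing $g_\ell$ with $xg_\ell$ deep inside a shrinking basic neighborhood of $\eta$, the directions $\eta_j$ can be forced to converge to~$\eta$.

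Third, contractivity. Given $xg_n\to\eta$ and $xg_n^{-1}\to\mu$, choose a single vertex set $S$ of finite diameter with $x\in S$ such that $\eta$ and $\mu$ are represented by metric rays in distinct components $U,V$ of $X\sm S$. For $n$ large the translates $Sg_n$ and $Sg_n^{-1}$ lie deep inside $U$ and $V$ respectively. Any vertex $y\in X\sm V$ lies on the same side of $Sg_n^{-1}$ as $x$, so $yg_n$ lies on the same side of $Sg_n$ as $xg_n$, hence in the component of $X\sm Sg_n$ that is contained in~$U$; boundary points of $\hat X\sm V$ are handled by taking limits of such vertices inside their basic neighborhoods, using the description of the topology.

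The main obstacle is precisely the uniform convergence clause of contractivity: Example~\ref{exam_vertexEnds} exhibits an automorphism that fixes a whole region yet has an orbit converging to an end, breaking uniform convergence. Passing to metric ends is what neutralizes this pathology, because a metric end carries no local ray and so every basic neighborhood of $\eta$ eventually leaves every ball of finite diameter. Making this quantitative — showing that the separating set $S$ really divides $\hat X$ cleanly into the two sides $U$ and $V$ and that $g_n$ respects this division for all $n$ large — is where the combinatorial argument must be carried out with care, and is the step I would expect to occupy most of the full proof.
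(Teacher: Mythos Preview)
Your plan matches the paper's proof closely: the topological axioms are handled as you describe (the paper simply cites Kr\"on for Hausdorffness, regularity, and the extension of automorphisms), projectivity is immediate from the definition of metric ends, conditions~(\ref{item_CompletionIII}) and~(\ref{item_CompletionIIStronger}) are reduced to Theorem~\ref{thm_KM2_12} via the same dichotomy on components of $X\sm B_i$, and contractivity is proved by the separating-set argument you outline (the paper uses two sets $S_U,S_V$ rather than a single~$S$, but this is only bookkeeping).

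The one place where the paper is more careful than your sketch is the passage to $\gamma=g_k^{-1}g_\ell$ in~(\ref{item_CompletionIII}). The inclusion $(S\cup C)\gamma\sub C$ does not follow merely from $xg_k$ and $xg_\ell$ lying in different components far from~$S$; replacing $\gamma$ by $\gamma^{-1}$ is not enough either. The paper first checks whether $g_k$ or~$g_\ell$ alone satisfies the hypothesis of Theorem~\ref{thm_KM2_12}, and if neither does, deduces that $B_ig_k^2$ lies on the same side of $B_ig_k$ as~$B_i$ (and similarly for~$g_\ell$); this extra information is what makes the inclusion go through for the product, with the separating set taken to be $B_ig_k$ rather than~$B_i$.
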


\begin{proof}
First, we mention that $\hat{X}$ is Hausdorff and regular, cp.\ \cite[Theorem~4]{Kroen-EndCompact} and that the canonical extensions of automorphisms of~$X$ are homeomorphisms of~$\hat{X}$, cp.\ \cite[Theorem~6]{Kroen-EndCompact}.
Furthermore, $X$ is open and dense in~$\hat{X}$.
Thus, it remains to prove (\ref{item_CompletionIII}) and~(\ref{item_CompletionIIStronger}) for $G=\Aut(X)$ and then that the $G$-completion is contractive.

We note that the condition for $\hat{X}$ being projective is -- as a direct consequence of the definition of metric ends -- valid even though we have not proved yet that $\hat{X}$ is a $G$-completion.
But we may use the property during the remainder of the proof.

\medskip

To prove (\ref{item_CompletionIII}), let $(g_j)_{j\in\nat}$ be a sequence in~$G$ with $d(x,xg_j)\to\infty$ for $j\to\infty$.
Let $B_i$ be the ball with centre $x$ and radius $i$.
Either there is for each $i$ precisely one component of~$X\sm B_i$ that contains all but finitely many vertices of $\{xg_j\mid j\in\nat\}$ or there are two components $C_1,C_2$ of $X\sm B_i$ and $k,\ell\in\nat$ with $B_ig_k\sub C_1$ and $B_ig_\ell\sub C_2$ as well as with $d(B_ig_k,B_i)\geq 2$ and $d(B_ig_\ell,B_i)\geq 2$.
In the first case, those components $D_i$ that contain all but finitely many of the vertices of $\{xg_j\mid j\in\nat\}$ define a unique metric end $\eta$ as the radii of the balls $B_i$ increase strictly: take the unique element in $\bigcap_{i\in\nat}\overline{D}_i$.
As the sequence $(xg_j)_{j\in\nat}$ eventually lies in each of these components, the sequence must have $\eta$ as an accumulation point.

Thus, we may assume that there are two distinct components $C_1,C_2$ of $X\sm B_i$ and $k,\ell\in\nat$ with $B_ig_k\sub C_1$ and $B_ig_\ell\sub C_2$ and with $d(B_ig_k,B_i)\geq 2$ and $d(B_ig_\ell,B_i)\geq 2$.
If either $g_k$ or $g_\ell$ satisfies the assumptions of Theorem~\ref{thm_KM2_12}, then there is a vertex $z$ on the metric double ray $L$ of the conclusion of Theorem~\ref{thm_KM2_12} such that the set $\{zg_j^n\mid n\in\ganz\}$, for either $j=k$ or $j=\ell$, has the metric ends to which every subray of $L$ converges as accumulation points.
By projectivity, we conclude that each of the two sets $\{xg_j^n\mid n\in\nat\}$ and $\{xg_j^{-n}\mid n\in\nat\}$ has an accumulation point in~$\rand X$.
So we assume that neither $g_k$ nor $g_\ell$ satisfies the assumptions of Theorem~\ref{thm_KM2_12}.
This implies that $B_ig_k^2$ must lie in the same component of $X\sm B_ig_k$ in which $B_i$ lies.
Analogously, $B_ig_\ell^2$ lies together with $B_i$ in a component of $X\sm B_ig_\ell$.
Let us consider the automorphism $g:=g_k\inv g_\ell$.
Let $y\in C_1$ with $d(y,B_ig_k)<d(B_i,B_ig_k)=d(B_ig_k^2,B_ig_k)$.
The vertices $yg_k\inv$ and $x$ must lies in the same component of~$X\sm B_ig_k$.
Hence, $x$ and $yg$ do not lie in the same component of $X\sm B_ig_\ell$ and the same is true for~$x$ and~$xg$.
This implies for the component $C$ of $X\sm B_ig_k$ that contains~$x$, that we have $(B_ig_k\cup C)g\sub C$.
According to Theorem~\ref{thm_KM2_12}, there is a metric end that is a limit point of~$\{xg^j\mid j\in\nat\}$ and the same holds for $\{zg^{-j}\mid j\in\nat\}$.
This finishes the proof of~(\ref{item_CompletionIII}).

For the proof of~(\ref{item_CompletionIIStronger}), let $x\in X$ and let $(g_k)_{k\in\nat}$ be a sequence in~$G$ with $xg_k\to\eta$ for $k\to\infty$ for some $\eta\in\rand X$ such that $\{xg_k\inv\mid k\in\nat\}$ has no accumulation point.
So there is an $i_0$ such that we find for all $i\geq i_0$ a ball $B_i$ of radius~$i$ and centre $x$ so that all but finitely many of the balls $B_ig_k$ lie in the same component $C_i$ of~$X\sm B_i$ and furthermore, all but finitely many of the balls $B_ig_k\inv$ lie outside~$C_i$.
If we find infinitely many $g_k$ that satisfy the assumptions of~Theorem~\ref{thm_KM2_12}, then the sets $\{xg_k^n\mid n\in\nat\}$ and $\{xg_k^{-n}\mid n\in\nat\}$ have distinct limit points $\eta_1$ and~$\eta_2$ in the set of metric ends and we find a sequence in the limit points of the sets $\{xg_j^n\mid n\in\nat\}$ that converges to~$\eta$ since for all $k$ with $B_ig_k\sub C_i$ one of the two limit points $\eta_1$ and~$\eta_2$ lies in~$C_i$, that is, contains a metric ray inside~$C_i$.
If we do not find these infinitely many $g_k$, then let $k$ be such that $B_ig_k$ lies in~$C_i$ and let $\ell$ be such that $B_ig_\ell\inv$ lies in a component of~$X\sm B_i$ distinct from~$C_i$ and such that $d(B_i,B_ig_k)>2$ and $d(B_i,B_ig_\ell\inv)>2$.
As in the proof of~(\ref{item_CompletionIII}), the automorphism $g_\ell g_k$ satisfies the assumptions of Theorem~\ref{thm_KM2_12} and, as we can choose $k$ among infinitely many natural numbers, we obtain our sequence of limit points of the sets $\{xg_\ell g_j^n\mid n\in\nat\}$ that converges to~$\eta$ similarly to the previous case.
This shows~(\ref{item_CompletionIIStronger}).

\medskip

Let us now prove that $\hat{X}$ is contractive.
We have already seen that $\hat{X}$ is projective.
So let $\sequ{g}$ be a sequence in~$G$ with $xg_i\to\eta$ and $xg_i\inv\to\mu$ for $i\to\infty$, some $x\in X$ and metric ends $\eta$ and~$\mu$.
Let $U$ be a neighbourhood of~$\eta$ and $V$ be a neighbourhood of~$\mu$.
We may assume that there are vertex sets $S_U$ and $S_V$ of finite diameter such that $U$ is a component of~$X\sm S_U$ and $V$ is a component of~$X\sm S_V$.
As $xg_n\to\eta$, there is an $n_1\in\nat$ such that $S_Vg_n$ lies in the same component of~$X\sm S_U$ as~$\eta$ and such that $d(xg_n,S_U)> d(x,S_V)+\diam(S_V)$ for all $n\geq n_1$.
Then we have $S_Vg_n\sub U$ and in particular $S_Vg_n\cap S_U=\es$ for all $n\geq n_1$.
Similarly, we find $n_2\in\nat$ such that $S_Ug\inv_n$ lies in the same component of~$X\sm S_V$ as~$\eta$ and such that $d(xg\inv_n,S_V)> d(x,S_U)+\diam(S_U)$ for all $n\geq n_2$.
Again, we have $S_Ug\inv_n\sub V$ and $S_Ug\inv_n\cap S_V=\es$ for all $n\geq n_2$ and hence also $S_U\sub Vg_n$.
Let $n_0:=\max\{n_1,n_2\}$, $n\geq n_0$ and $y\in \hat{X}\sm V$.
As $S_U\sub Vg_n$ and $S_V$ separates $y$ and~$S_Ug_n\inv$, the vertex $yg_n$ must lie outside the component of $X\sm (S_Vg_n)$ that contains $S_U$.
Since it is cannot be separated from $\eta$ by~$S_U$, we have $yg_n\in U$.
This shows that $(yg_n)_{n\in\nat}$ converges uniformly to~$\eta$ outside every neighbourhood of all accumulation points of $\{xg\inv_i\mid i\in\nat\}$ in~$\hat{X}$.
\end{proof}

In the case of locally finite graphs with their vertex ends as boundary, a parabolic automorphism $g$ has the additional property that the sequence $(xg^i)_{i\in\nat}$ converges to the unique fixed end for any vertex~$x$.
This is not true in the case of arbitrary graphs with their metric ends as boundary:
Kr\"on and M\"oller \cite[Example 3.16]{KroenMoeller-MetricEnds} constructed a graph with precisely one metric end and an automorphism that fixes no bounded vertex set but leaves a double ray invariant that is neither bounded nor a metric double ray.
This implies that for any vertex $x$ on that double ray, its orbit is unbounded but there is a vertex set of finite diameter that contains infinitely many of the vertices in its orbit.
This shows that, for contractive $G$-compactifications, an analogous converging property as for hyperbolic automorphisms does not hold in the case of parabolic automorphisms.

\section{Concluding remarks}

Apart from the general investigation of groups acting on proper geodesic hyperbolic spaces or on locally finite graphs, there are several more detailed investigations most of which take either Theorem~\ref{thm_W3}\,(\ref{item_W3_3}) or Theorem~\ref{thm_W3}\,(\ref{item_W3_1}) as starting point and investigate these situations in more detail: M\"oller~\cite{EoGII} showed that locally finite graphs with infinitely many ends for which a group of automorphisms acts transitively on the graph but fixes an end are quasi-isometric to trees.
The same result was obtained in~\cite{EndTransitivity} for arbitrary graphs with infinitely many ends.
Caprace et al.~\cite{CCMT-AmenableHyperbolic} showed an analogous result for locally finite hyperbolic graphs where the fixed end is replaced by a fixed hyperbolic boundary point (the planar situation was settled earlier in~\cite{GH-PlanarHyperbolic}).

In~\cite{KM-FreeGroups}, Kr\"on and M\"oller started with the situation of Theorem~\ref{thm_W3}\,(\ref{item_W3_1}) and showed that if a group acts on a connected graph such that no vertex end is fixed by the group, then the group has a free subgroup containing (except for the trivial element) only hyperbolic automorphisms and the directions of these hyperbolic automorphisms are dense in the set of all limit points of the group.
In the same paper, they also mentioned that an analogous proof holds for metric ends instead of vertex ends.
The analogous theorem for proper geodesic hyperbolic spaces is proved in~\cite{HM-FreeGroupsHyp}.

\providecommand{\bysame}{\leavevmode\hbox to3em{\hrulefill}\thinspace}
\providecommand{\MR}{\relax\ifhmode\unskip\space\fi MR }
\providecommand{\MRhref}[2]{%
  \href{http://www.ams.org/mathscinet-getitem?mr=#1}{#2}
}
\providecommand{\href}[2]{#2}

\end{document}